\def\argmin{\mathop{\rm argmin}}
\def\bfu{{\bf u}}
\def\bfv{{\bf v}}
\def\bff{{\bf f}}
\def\calA{{\mathcal A}}
\def\calAt{{\mathbb A}}
\def\calDt{{\mathbb D}}
\def\calIt{{\mathbb I}}
\newcommand{\R}{\mathbb{R}}
\newcommand{\ZZ}{\mathbb{Z}}
\newcommand{\II}{\mathbb{I}}
\newcommand{\CC}{\mathbb{C}}
\newcommand{\bfF}{{\bf F}}
\newcommand{\bPsi}{{\bf \Psi}}
\def\argmin{\mathop{\rm argmin}}
\def\AA{{\mathrm A}}
\def\calMt{{\mathbb M}}
\def\calIt{{\mathbb I}}
\def\calWt{\mathbb{W}}
\def\CC{{\Lambda}} %\Lambda}}
\newtheorem{theorem}{Theorem}[section]
\newtheorem{corollary}[theorem]{Corollary}
\newtheorem{lemma}[theorem]{Lemma}
\newtheorem{remark}[theorem]{Remark}
\title[Comparison of URA methods for fractional diffusion problems]{Comparison analysis on two numerical methods for fractional diffusion problems based on rational approximations of $t^{\gamma}, \ 0  \le t \le 1$}
\author[S. Harizanov, R. Lazarov, S. Margenov, P. Marinov, J. Pasciak]
{Stanislav Harizanov \and Raytcho Lazarov \and Pencho Marinov \and Svetozar Margenov \and Joseph Pasciak}
\address{Institute of Information and Communication Technologies, Bulgarian Academy of 
Sciences, Acad. G. Bonchev, bl. 25A, 1113 Sofia, Bulgaria
(sharizanov@parallel.bas.bg)}
\address{Deptartment of Mathematics, Texas A\&M University, 
College Station, TX 77843-3368, USA and Institute of Mathematics and Informatics,
Bulgarian Academy of Sciences, Acad. G. Bonchev, bl. 8, 1113 Sofia, Bulgaria (lazarov@math.tamu.edu)}
\address{Institute of Information and Communication Technologies, Bulgarian Academy of 
Sciences, Acad. G. Bonchev, bl. 25A, 1113 Sofia, Bulgaria (pencho@parallel.bas.bg)}
\address{Institute of Information and Communication Technologies, Bulgarian Academy of 
Sciences, Acad. G. Bonchev, bl. 25A, 1113 Sofia, Bulgaria (margenov@parallel.bas.bg)}
\address{Deptartment of Mathematics, Texas A\&M University, 
College Station, TX 77843-3368, USA (pasciak@math.tamu.edu)}
\begin{document}
%\date{\today}

\begin{abstract}
We discuss, study, and compare experimentally three methods for solving the system of algebraic equations
$\calAt^\alpha \bfu=\bff$, $0< \alpha <1$, where 
$\calAt$ is a symmetric and positive definite matrix obtained 
from finite difference or finite element approximations of second 
order elliptic problems in $\R^d$, $d=1,2,3$. The first method, 
introduced in \cite{HLMMV18}, based on the best
uniform rational approximation (BURA) $r_\alpha(t)$ of %the function
$t^{1-\alpha}$ for $0 \le t \le 1$, is used to get the rational
approximation of  $t^{-\alpha}$ in the form $t^{-1}r_\alpha(t)$.
Here we develop another method, denoted by R-BURA, that is 
based on the best rational approximation $r_{1-\alpha}(t)$ of
$t^\alpha$ on the interval $[0,1]$  and approximates $t^{-\alpha}$ via $r^{-1}_{1-\alpha}(t)$. 
The third method, introduced and studied by Bonito and Pasciak in \cite{BP15}, 
is based on an exponentially convergent quadrature scheme for the Dundord-Taylor integral representation 
of the fractional powers of elliptic operators.
%Then $r^{-1}_{1-\alpha}(t)$ 
%approximates $t^{-\alpha}$ on $(0,1] $ and is used to approximate $\calAt^{-\alpha} \bff$.
%$R_\alpha (t)$ is used to approximate $t^{-\alpha}$ by $1/ R_\alpha(t)$.
All three methods reduce the solution of the system $\calAt^\alpha \bfu=\bff$ to solving
a number of equations of the type $(\calAt +c \II)\bfu= \bff$, $c \ge 0$. 
%The optimal complexity of the related algorithms is based on the assumption that
%multi-grid, multilevel, or other fast solvers are used to solve the
%arising sparse elliptic problems. 
Comprehensive numerical experiments on
model problems with $\calAt$ obtained by approximation of elliptic equations
in one and two spatial dimensions are used to compare the efficiency of these three
 algorithms depending on the fractional power $\alpha$. 
The presented results prove the concept of the new R-BURA method,  which performs well for $\alpha$ close to $1$
in contrast to BURA, which performs well for $\alpha $ close to $0$. As a result, we show theoretically and experimentally,
that they have mutually complementary advantages.

\end{abstract}

\maketitle

\section{Introduction}\label{section1}
\subsection{Algebraic problems under consideration } %Problem Formulation}
\label{sec:problem}

Now let $\R^N$, $N$ positive integer, be an $N$-dimensional vector space 
with the standard $\ell_2$-inner product, $\bfu^T \bfv$,  for any real vectors $\bfu, \bfv \in \R^N$,
and let
%the dimension of $V_h$ and consider that a standard nodal basis is used. Let  
$\calAt  \in \R^{N \times N}$ be a symmetric and positive definite matrix 
with eigenvalues and eigenvectors $\{ (\lambda_i, \bPsi_i) \}_{i=1}^N $.
We assume that the eigenvectors are orthonormal, that is
$\bPsi_i^T \bPsi_j = \delta_{ij}$,  and $0 <\lambda_1 \le \lambda_2 \le \dots \lambda_N$.

For $0< \alpha <1$  and given $\bff \in \R^N$ we consider the following algebraic problem: 
\begin{equation}\label{eq:fal}
\mbox{find } ~~ \bfu \in \R^N ~~ \mbox{such that } \quad
{\calAt}^\alpha \bfu =   \bff
\end{equation}
%The matrix $\calAt$ is symmetric and positive definite.
where the fractional power $\calAt^\alpha$ is defined through eigenvalues and eigenvectors of $\calAt$
$$
\calAt^\alpha = \calWt \calDt^\alpha \calWt^T, \ \  \mbox{ where } \ \  \calAt=\calWt \calDt \calWt^T.
$$
Here  
$\calWt, \calDt \in \R^{N \times N}$ are
defined as $\calWt=[\bPsi_1^T, \bPsi_2^T, ..., \bPsi_N^T]$ and
$\calDt=diag(\lambda_1, \dots, \lambda_N)$.
Then $ \calAt^{-\alpha}= \calWt \calDt^{-\alpha} \calWt^T$ and the solution of 
$\calAt^\alpha \bfu =\bff$ can be expressed as
\begin{equation}\label{eqn:exacalg}
{\bfu} = \calAt^{-\alpha} \bff =\calWt \calDt^{-\alpha} \calWt^T \bff.
\end{equation}
For any $\beta \in \R$ we have  
%\red  {equality} for :
$
\| \bfu \|_{\calAt^{\beta+\alpha}}= \| \bff \|_{\calAt^{\beta-\alpha}} $, where
$
\| \bfu\|^2_{\calAt^\gamma} = \bfu^T \calAt^{\gamma} \bfu, \  \gamma \in \R .
$

\newpage
\subsection{Examples of SPD matrices under consideration} 
%%%%%%%%%%%%%%%%%%%%%%%%%%%%%%%%
\subsubsection{Example 1.}
%%%%%%%%%%%%%%%%%%%%
%2D numerical experiments on fractional diffusion over the unit square}\label{sec44}
The first example of such a matrix is $\calAt \in \R^{N \times N}$, $N=n^2$, that has the
following block stricture (here $ \calAt_{i,i} \in \R^{n \times n}$, $i=1, \cdots, n $ 
and $\calIt_n$ is the identity matrix in $\R^n$) 
\begin{equation}\label{FD-matrix} 
\calAt^{}=%\frac{1}{h^2}
(n+1)^2 \left[\begin{array}{ccccc} \calAt_{1,1} & -\calIt_n &&&\\ -\calIt_n & \calAt_{2,2} & -\calIt_n 
&&\\\cdots &\cdots &\cdots &\cdots &\cdots\\ & -\calIt_n & \calAt_{i,i} & -\calIt_n &\\\cdots &\cdots &\cdots &\cdots 
&\cdots\\&&& -\calIt_n & \calAt_{n,n}\end{array}\right],\quad \calAt_{i,i}=\left[\begin{array}{cccc} 
4&-1&&\\-1&4&-1&\\\cdots&\cdots&\cdots&\cdots\\&-1&4&-1\\&&-1&4\end{array}\right].
%,\;\forall i=1,\dots,n.
\end{equation}
%It is straightforward to check that $\calAt=8\calA/h^2$, where $\calA$ is 
%an $N^2\times N^2$ SPD matrix with spectrum in the interval $(0,1]$.

This matrix is generated by the finite difference approximation of the 
following boundary value problem 
% we consider the fractional 2D Laplace problem
\begin{equation}\label{2D Laplace}
\begin{split}
-\Delta u = f, \ \  x \in \Omega=(0,1) \times (0,1), \ \ 
u =0, \ \ x \in \partial \Omega
\end{split}
\end{equation}
%on the unit square with homogeneous Dirichlet boundary conditions. The corresponding discrete 5-point stencil 
%approximation $\calAt$ of $\Delta$ 
on a uniform mesh with mesh-size $h=1/(n+1)$.

For a given $\bff \in \R^{N}$  the algebraic problem 
%of finding $\bfu \in \R^{N^2}$:
\begin{equation}
\label{FD4Laplace}
\mbox{ find  } \bfu \in \R^{N} \ \ \mbox{ that satisfies } \ \ 
{\calAt}^\alpha \bfu =   \bff 
\end{equation}
is an approximation to the boundary value problem
\begin{equation}\label{FracLaplace}
(-\Delta)^\alpha u = f, \ \  x \in \Omega=(0,1)\times (0,1), \ \ 
u =0, \ \ x \in \partial \Omega
\end{equation}
%on a square mesh with mesh-size $h=1/(n+1)$ 
with $\bff$ being a projection of $f(x)$ onto
the space of mesh functions.

We are not aware of rigorous analysis of the approximation properties of 
problem \eqref{FD4Laplace}. 
However, this linear system approximates \eqref{2D Laplace}
due to its relation of the finite element approximation on a triangular mesh, discussed below.

\subsubsection{Example 2.}
%%%%%%%%%%%%%%%%%%%%
We partition $\Omega=(0,1) \times(0,1)$ into squares of size $h=1/(n+1)$.
Let ${\mathcal T}_h$ be  obtained by subdividing each square into two 
triangles by connecting the upper left 
corner with the lower right corner. On this triangulation we introduce the 
space $V_h \subset H^1_0(\Omega)$ of continuous piece-wise linear function. The 
finite element approximation of \eqref{2D Laplace}  is:  find $u_h \in V_h$ such that
\begin{equation}
\label{Lapalce_weak}
a(u_h,v) := \int_\Omega   \nabla u_h(x) \cdot  \nabla v(x) dx = (f,v)  := (\pi_h f, v)\ \ \forall v \in V_h.
\end{equation}
Here $(\cdot, \cdot) $ is the standard $L_2$-inner product on $V_h$.
We define the operator $\AA: V_h \to V_h$ by %its Riesz representation
$(\AA u,v)= (z,v )$ for all $v \in V_h$.  Then
$\AA y=z \in V_h$ should have representation through the nodal basis $\psi_k$: $z=\sum c_k \psi_k$
%Then by the relation $(Au,v)= (z,v )$ for all $v \in V^h$ we see that
and then the operator $\AA$ is expressed through the global ``stiffness" matrix %with elements 
$ \{ {\mathbb A} \}_{i,k}= a(\psi_i,\psi_k) $ 
and the global ``mass" matrix ${\mathbb M} =\{ (\psi_i,\psi_k)\}_{i,k} $
 via the relation $ \AA= {\calMt}^{-1} {\calAt} $.

The matrix ${\calMt}$ is not diagonal and has similar sparsity pattern as the 
the ``stiffness" matrix $ {\calAt}$.  The algebraic problem
\begin{equation}
\label{FEM}
\AA^\alpha u_h = \pi_h f
\end{equation}
is stable and 
$
u_h(x) = \sum_{i=1}^{N} u_i \psi_i(x), \  \bfu=\{u_1,\cdots, u_{N}\}^T
$
approximates the solution $u(x)$ of \eqref{FracLaplace}, see, \cite{BP15}.

In order to get the matrix $\calAt$,
as need in \eqref{FD4Laplace},  (instead of  $ {\AA}$) we can apply the following approach.
First, we introduce the ``lumped" mass inner product in $V_h$, \cite[pp.239--242]{Thomee2006}.
Namely, for $z,v \in V_h$ we define
$$
(z,v)_h = \frac13 \sum_{\tau \in {\mathcal T}_h } \sum_{i=1}^3 |\tau| \ z(P_i) v(P_i),
\ \ \mbox{and lumped ``mass" matrix} \ \ {\calMt} =\{ (\psi_i,\psi_k)_h\}_{i,k},
$$
where $P_1, P_2, P_3$ are the vertexes of the triangle $\tau$ and $|\tau|$ is its area.
Note that the lumped mass  matrix ${\calMt}_h$ is diagonal!
Moreover, since the mesh is square, all diagonal elements of $ {\calMt}^{-1}_h$ are equal to 
$ h^{-2}=(n+1)^{2}$ and  $  {\AA}: V_h \to V_h$ is defined by
$$
 ({\AA } u_h, v)_h = a(u_h,v) \ \ 
 \mbox{ gives } \ \ \AA = {\calMt}^{-1} {\calAt}.
$$ 
Since on a uniform mesh $(\cdot, \cdot)_h $ is a good approximation 
of $(\cdot,\cdot)$ then one concludes that \eqref{FD4Laplace} approximates the 
problem \eqref{FracLaplace}.

\subsubsection{Example 3.}
%%%%%%%%%%%%%%%%%%%%

%\begin{remark}\label{reaction}
Similar considerations could be also used in solving elliptic problem with non-constant 
coefficient in the reaction term generated by the weak form
\begin{equation}\label{eqn:weak}
A(u_h,v) := % \int_\Omega \big ( \nabla u({x}) \cdot  \nabla v +
a(u_h, v) +(qu_h , v)_h =(f, v) \ \ \forall v \in V_h,
\end{equation}
with  $q=q(x) \ge 0$. The bilinear form is symmetric and coercive on $V_h$ 
and the corresponding algebraic problem will have the same properties as the one
involving Laplace operator. Then the matrix of the corresponding linear system is the sum
%Then the matrix equation  \eqref{eq:fal} is
$
%\calAt^{(2)}  \bfu = \bff, \ \ \mbox{with} \ \ \calAt^{(2)} = 
\calAt^{} + \calDt, 
$
where %$ \calAt^{(1)}$ is defined in \eqref{FD-matrix} and 
$\calDt$ is a diagonal matrix in  $\R^{N}$ with entries the values of $q(x)$ at the mesh points.

%\end{remark}
\begin{remark}\label{Neumann}
One can generate matrices with similar structure while solving elliptic problems with Neumann 
or Robin boundary conditions. 
%Naturally, in order to get compact 5-point stencil of the scheme
%one needs to do proper ``lumping" of the mass matrix that could arise in approximating 
%zero order term on the boundary.
\end{remark}

\subsection{The concept of the best uniform rational approximation (BURA)}\label{ss:rat}

We shall use the following notation  for a class of rational functions:  
$$
\mathcal R(k,m)= \{ r(t): ~ r(t)=P_k(t)/Q_m(t), \ \ \mbox{where} \ P_k \in {\mathcal P}_k, \ 
\mbox{and  } \ Q_m \in {\mathcal P}_m, \ \mbox{monic} \}
%\mbox{polynomials of degree $k$ and $m$, respectively}\}.
$$
with  ${\mathcal P}_j$ set of algebraic polynomials of degree $j$.
The best uniform approximation $r_\alpha(t) \in \mathcal R(k,m)$ of $t^{1-\alpha}$ on $[0,1]$ 
(called further $(k,m)$-BURA), and its 
approximation error $E_{\alpha,k,m}$ are defined as follows:
$$
r_\alpha(t) :=\argmin_{ r \in \mathcal R(k,m)} \left \| t^{1-\alpha} - r(t)  \right \|_{L^\infty(0,1)},\ \ 
\varepsilon(t) = r_{1-\alpha}(t)- t^\alpha, \ \ 
E_{\alpha,k,m}:= \| \varepsilon (t) \|_{L^\infty(0,1)}.
%\left \|t^{1-\alpha} - r_\alpha(t)\right \|_{L^\infty(0,1)}.
$$
For $k=m$ the existence and uniqueness of $r_\alpha(t)$ has been established long time ago 
\cite[Chapters 9.1 and 9.2]{Meinardus67}. Moreover, it is known that both the numerator and the denominator 
of the minimizer are of exact degree $k$ and the error function $\varepsilon(t)$ possesses $2k+2$ 
extreme points in $[0,1]$, including the endpoints of the interval.

\subsection{Methods for solving equations involving functions of 
matrices}\label{methods}
%%%%%%%%%%%%%%%%%%%%%%%%%%%%%%%%%%%

The formula \eqref{eqn:exacalg}  could be used in practical 
computations if the eigenvectors and eigenvalues are explicitly known and  
Fast Fourier Transform is applicable to perform the matrix vector 
multiplication with $\calWt$, thus leading to almost optimal computational 
complexity, $O(N \log N)$. However, this approach is limited to 
separable problems with constant coefficients in simple domains and boundary conditions.

This work is related also to the more difficult problem of stable 
computations of the matrix square root and other functions of matrices, 
see, e.g. the earlier papers \cite{druskin1998extended,Higham1997,Kenney1991}, 
as well as, \cite{SFilip18} for some more recent related results.  
However, in this paper we do not deal with evaluation 
of $\calAt^{\alpha}$, instead we discuss efficient methods
for solving the algebraic system $\calAt^{\alpha} \bfu =\bff$, 
where $\calAt$ is an SPD matrix generated by 
approximation of second order elliptic operators.

Our research is also connected with the work done in  
\cite{ilic2005numerical,ilic2009numerical}, where numerical approximation 
of a fractional-in-space diffusion equation is considered.
In \cite{ilic2009numerical}, the proposed solver relies on Lanczos 
method. First, the adaptively preconditioned thick restart Lanczos 
procedure is applied to a system with $\calAt$.  The gathered spectral 
information is then used to solve the system with $\calAt^\alpha$. 
In \cite{druskin1998extended} an extended Krylov subspace method is 
proposed, originating by actions of the SPD matrix and its inverse. 
It is shown that for the same approximation quality, the variant of 
the extended subspaces requires about the square root of the dimension 
of the standard Krylov subspaces using only positive or negative matrix 
powers. A drawback of this method is the memory required to store 
the full dense matrix $\calWt$, and the substantial deterioration of the 
convergence rate for ill-conditioned matrices. The advantage of 
the approach discussed in this paper  is the robustness and almost 
optimal computational complexity.

\subsection{Our contributions}\label{contributions}
%We develop rational approximations of   $\calAt^{-\alpha} \bf f$. 
%We shall use the following notation  for a class of rational functions:  
%$$
%\mathcal R(k,m)= \{ r(t): ~ r(t)=P_k(t)/Q_m(t), \ \ \mbox{where} \ P_k \in {\mathcal P}_k, \ 
%\mbox{and  } \ Q_m \in {\mathcal P}_m, \ \mbox{monic} \}
%%\mbox{polynomials of degree $k$ and $m$, respectively}\}.
%$$
%with  ${\mathcal P}_j$ set of algebraic polynomials of degree $j$.
%The best uniform approximation $r_\alpha(t) \in \mathcal R(k,m)$ of $t^{1-\alpha}$ on $[0,1]$ 
%(called further $(k,m)$-BURA), and its 
%approximation error $E_{\alpha,k,m}$ are defined as follows:
%$$
%r_\alpha(t) :=\argmin_{ r \in \mathcal R(k,m)} \left \| t^{1-\alpha} - r(t)  \right \|_{L^\infty(0,1)},\qquad
%E_{\alpha,k,m}:=\left \|t^{1-\alpha} - r_\alpha(t)\right \|_{L^\infty(0,1)}.
%$$
%
%We note that the extended Krylov subspace method \cite{druskin1998extended} also belongs to this class. 
%\blue{However, the advantage of our approach is the robustness and almost 
%optimal computational complexity.} 
%
We investigate two approaches for approximate solving of   $\calAt^{-\alpha} \bf f$ that
are based on the best uniform rational
approximation (BURA) $r(t)$ of $t^\gamma$, $\gamma >0$, on $[0,1]$.
One subclass of such approximations is expressed through
diagonal Walsh table $P_k(t)/Q_k(t)$, i.e. $r \in {\mathcal R}(k,k)$,
%with $P_k(t)$ and $Q_k(t)$ polynomials of degree $k$ and $Q_k(t)$ monic polynomial,  
see, e.g. \cite{stahl2003,varga1992some}.  
Another subclass is the upper diagonal $P_{k+1}(t)/Q_k(t)$,
i.e.,  $r \in  {\mathcal R}(k+1,k)$. 
The first analyzed method is introduced in
\cite{HLMMV18},  where the BURA $r_\alpha(t)$ of $t^{1-\alpha}$, 
$0 < t \le 1$, introduces a rational approximation of  
$t^{-\alpha}$ in the form $t^{-1}r_\alpha(t)$.
Here we develop a new method, denoted by R-BURA, where the best uniform rational
approximation $r_{1-\alpha} (t)$ of $t^\alpha$ is used to approximate $t^{-\alpha}$ by $1/ r_{1-\alpha}(t)$.
Both methods reduce solving \eqref{eq:fal} to a number of 
equations $(\calAt +c \calIt)\bfu= \bfF$, $c \ge 0$. 
%The optimal complexity is based on the assumption that
%fast solvers are utilized to solve the arising sparse elliptic problems. 

Our comparative analysis includes also the method proposed in \cite{BP15} that is 
based on approximation of the integral representation of the solution of \eqref{FracLaplace}.
%the fractional powers of Laplacian.
Then exponentially convergent quadrature formulae are applied to evaluate 
numerically the related integrals. In fact, this Q-method leads also to a rational approximation as well. 
The problem with checkerboard right hand side, introduced in \cite{BP15}, 
is used in the numerical tests of our comparative analysis.

The rest of the paper is organized as follows. In Section \ref{section2} we introduce
the basic properties of the studied solution methods and algorithms. 
The analysis includes error estimates of the
BURA, \cite{HLMMV18}, the new R-BURA, and Q-method of Bonito and Pasciak, \cite{BP15}.
Section \ref{sec:tests} contains numerical tests for fractional Laplace problems.
In the case of BURA and R-BURA solvers, the impact of scaling is
analyzed and experimentally confirmed. Among others, the numerical 
results complete the proof of concept of the new R-BURA approach,
illustrating its advantages in the case of larger $\alpha \in (1/2,1)$.

\section{Description of the numerical methods}\label{section2}
%%%%%%%%%%%%%%%%%%%%%%%%%%%%
\subsection{The BURA method}
%%%%%%%%%%%%%%%%%%%%%%%
In this paper we consider  two BURA subclasses $(k,k)$ and $(k+1,k)$, 
introduced in Section~\ref{contributions}. Let $\CC:=\|\calAt\|_{\infty}=\max_{1\le i\le N}\sum_{j=1}^N|a_{ij}|$.
%be the matrix sup norm of the SPD matrix $\calAt$. 
Following \cite{HLMMV18}, we obtain the rescaled SPD matrix 
$\calA:=\CC^{-1}\calAt$
with spectrum in  $(0,1]$. Then the original problem $\calAt^\alpha\bfu=\bff$ can be rewritten 
as $\calA^\alpha\bfu =\CC^{-\alpha}\bff$. Note that the eigenvalues of $\calA$ are 
$\mu_i:=\CC^{-1}\lambda_i$, $0< \mu_i \le 1$,
$i=1, \cdots, N$.
% (recall, $\lambda_1\le\lambda_2\dots\le\lambda_N$ are the eigenvalues of $\calAt$). 

Let $r_\alpha(t)$ be  the  BURA of 
$t^{1-\alpha}$  in ${\mathcal R}(k,k)$ or ${\mathcal R}(k+1,k)$. Then 
%the solution of the equation
%
\begin{equation}\label{eq:BURA}
 \bfu_r:=\CC^{-\alpha}\calA^{-1}r_\alpha(\calA)\bff,
\end{equation}
are called $(k,k)$-BURA and $(k+1,k)$-BURA approximation of $\bfu$, respectively.
%for which does not involve directly the dense matrix $\calA^{-\alpha}$ (see \cite{HLMMV18}). 

Using the spectral decomposition of $\calAt$, we 
can derive the following estimation of the BURA error:
\begin{equation}\label{eq:BURA error analysis}
\frac{\|\bfu_r-\bfu \|_2}{\|\bff\|_2}=\CC^{-\alpha}\max_{\mu_i}\frac{\left|r_\alpha(\mu_i)-\mu_i^{1-\alpha}\right|}{\mu_i}
\le \frac{\CC^{1-\alpha}E_{\alpha,k,m}}{\lambda_1}.
\end{equation}
%In \cite{Stahl93} the asymptotic behavior of $E_{\alpha,k.k}$ as $k\to\infty$ is studied. 
Then using \cite[Theorem 1]{Stahl93}  (about the behavior of $E_{\alpha,k.k}$ 
as $k\to\infty$) we get the following property of the $(k,k)$-BURA:
\begin{equation}\label{eq: BURA error}
\lim_{k \to \infty} e^{2\pi\sqrt{(1-\alpha)k} }  \|\bfu_r-\bfu \|_2
    =    \frac{4^{2-\alpha}\CC^{1-\alpha}}{\lambda_1}\sin(\pi\alpha)\|\bff\|_2. %\qquad k\to\infty, 
\end{equation}
% true for the subclass , 
 Since $E_{\alpha,k,k}\ge E_{\alpha,k+1,k}\ge E_{\alpha,k+1,k+1}$ by definition, \eqref{eq: BURA error}
 is valid for $(k+1,k)$-BURA, as well.
 
We restricted our experiments to $(k,k)$-BURA method.
The implementation uses the decomposition of the rational function 
 $t^{-1}r_\alpha(t)$ into sum of partial fractions so that
 %. This reduces the computations to solving independent 
%$k+1$ linear systems based on nonnegative diagonal shifts of $\calAt$, namely
$$
\bfu_r=\sum_{j=0}^{k} c_j(\calA - d_j{\calIt})^{-1} \bff=\CC\sum_{j=0}^{k} c_j(\calAt - \CC d_j{\calIt})^{-1} \bff.
$$
Here $0=d_0>d_1\dots>d_k$ are the poles of $r_\alpha(t)$ plus the additional pole at zero, and $c_j>0$ for every $j$ 
(see \cite{SS93} for more details). Obviously, the approximation $\bfu_r$ is obtained by solving 
$k+1$ linear systems with nonnegative diagonal shifts of $\calAt$.

\subsection{The R-BURA method}
%%%%%%%%%%%%%%%%%%%%%%%%%

In this approach, we approximate $t^{-\alpha}$  %not by $t^{-1}r_{\alpha}(t)$ but 
by $r^{-1}_{1-\alpha}(t)$  where $r_{1-\alpha}(t)$ is the best rational approximation of 
$t^\alpha$ in ${\mathcal R}(k,k)$ or ${\mathcal R}(k+1,k)$.
%We stress, that this is in contrast with the previous method, where the approximation was by $t^{-1}r_{\alpha}(t)$.
Then % approximation $\bfu_r$ of $\bfu$ is then
%Therefore, for the approximant we have
\begin{equation}\label{eq:R-BURA}
 \bfu_r:=\CC^{-\alpha}r^{-1}_{1-\alpha}(\calA)\bff
\end{equation}
are  called $(k,k)$-R-BURA and $(k+1,k)$-R-BURA approximation of $\bfu$, respectively.
%We stress, that \eqref{eq:R-BURA} in contrast with BURA, where the approximation was by $t^{-1}r_{\alpha}(t)$.

For the analysis of the BURA-method we shall need the following properties of $r_\alpha(t)$:
\begin{lemma}\label{lemma:BURA monotonicity}
Let $\alpha\in(0,1)$ 
and $k$ be a positive integer. Then the best rational approximation (BURA)
%the $(k,k)$-BURA 
 $r_\alpha(t) \in \mathcal R({k,k})$ of $t^{1-\alpha}$ in $[0,1]$ has the following properties:
\begin{itemize}
 \item[(a)] $r_\alpha(t)$ is strictly monotonically increasing concave function when $t\in[0,1]$;
\item[(b)] $r_\alpha(0)=E_{\alpha,k,k}$.
\end{itemize}
\end{lemma}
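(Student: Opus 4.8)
The plan is to deduce both properties from the equioscillation characterization of the BURA $r_\alpha(t)$ of $t^{1-\alpha}$ on $[0,1]$, together with a positivity/monotonicity argument of Markov–Bernstein type on the error function. Recall from the excerpt that the error $\eps(t) = r_\alpha(t) - t^{1-\alpha}$ has exactly $2k+2$ alternation points in $[0,1]$, including $t=0$ and $t=1$. Since $t^{1-\alpha}$ vanishes at the origin, evaluating the alternation at $t=0$ immediately gives $r_\alpha(0) = \eps(0) = \pm E_{\alpha,k,k}$, and the sign must be $+$ because $r_\alpha$ is a positive rational function approximating a positive function (one checks that the leading coefficient and the partial-fraction residues $c_j>0$, as already asserted for $t^{-1}r_\alpha(t)$ in the BURA subsection, force $r_\alpha(t)>0$ on $[0,1]$; alternatively, $r_\alpha(0)=-E_{\alpha,k,k}<0$ would make $t^{-1}r_\alpha(t)$ negative near $0$, contradicting $c_0>0$). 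This proves (b).

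For (a), **first I would** write $r_\alpha = P_k/Q_k$ with $P_k,Q_k$ of exact degree $k$ and, crucially, both having all their zeros outside $(0,1]$ — indeed the poles $d_1,\dots,d_k$ of $r_\alpha$ are known to be negative (they appear as $d_j<0$ in the partial-fraction display), and a standard fact for diagonal BURA of $t^{1-\alpha}$ is that the zeros of $P_k$ also lie in $(-\infty,0)$. Then $r_\alpha'(t)$ and $r_\alpha''(t)$ are rational functions whose denominators $Q_k^2$, $Q_k^3$ are strictly positive on $[0,1]$, so the sign of $r_\alpha'$ (resp. $r_\alpha''$) is the sign of a polynomial of controlled degree. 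To pin those signs down I would use the partial-fraction form $t^{-1}r_\alpha(t) = \sum_{j=0}^k c_j/(t-d_j)$ with $c_j>0$, $d_j\le 0$: multiplying by $t$ and differentiating, $r_\alpha'(t) = \sum_{j=0}^k c_j\bigl(-d_j/(t-d_j)^2\bigr) \ge \sum_{j\ge 1} c_j(-d_j)/(t-d_j)^2 > 0$ on $(0,1]$ (the $j=0$ term vanishes since $d_0=0$), giving strict monotone increase. Differentiating once more, $r_\alpha''(t) = \sum_{j=0}^k c_j\bigl(2d_j/(t-d_j)^3\bigr)$; for $j\ge 1$ we have $d_j<0$ and $(t-d_j)^3>0$, so every term is $\le 0$ and $r_\alpha''(t)<0$ on $(0,1]$, i.e. $r_\alpha$ is concave.

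**The main obstacle** is justifying that all residues $c_j$ are positive and all poles $d_j$ nonpositive \emph{for the function $t^{-1}r_\alpha(t)$}, equivalently that $r_\alpha$ is a Stieltjes-type function on $[0,1]$; the excerpt cites \cite{SS93} and \cite{SS93}-adjacent results for this, so I would invoke that citation rather than reprove it, but I would want to state cleanly which property is being borrowed. A secondary, more elementary route that avoids the full partial-fraction structure is to argue by contradiction using equioscillation directly: if $r_\alpha$ were not concave, $\eps'' = r_\alpha'' - (1-\alpha)(-\alpha)t^{-\alpha-1}$ would have too many sign changes, which combined with the $2k+2$ alternation points of $\eps$ would force $P_k/Q_k$ outside $\mathcal R(k,k)$; counting zeros of $\eps$, $\eps'$, $\eps''$ via Rolle's theorem against the degree budget $2k$ of the numerator of $r_\alpha - t^{1-\alpha}$ (after clearing $Q_k$ and handling the branch point at $0$ carefully) yields the contradiction. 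I expect the first, residue-based argument to be shorter and cleaner, so I would present that as the proof and relegate the equioscillation counting to a remark.
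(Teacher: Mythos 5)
Your proof is correct and follows essentially the same route as the paper: both deduce (b) from the sign of the equioscillation extremum at $t=0$ (equivalently the positivity of the residue $c_0=r_\alpha(0)$, a fact borrowed from \cite{SS93}), and both prove (a) by differentiating a partial-fraction decomposition term by term using that the poles $d_j$ are negative and the residues have a fixed sign — you work with $t^{-1}r_\alpha(t)=\sum_j c_j/(t-d_j)$, $c_j>0$, while the paper writes $r_\alpha(t)=b_0^\ast+\sum_j c_j^\ast/(t-d_j)$ with $c_j^\ast<0$, which is the same computation up to the identity $c_j^\ast=c_jd_j$. The alternative equioscillation-counting argument you sketch is not needed and is rightly relegated to a remark.
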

\begin{proof}
The second part follows directly from \cite[Lemma 2.1]{SS93}, where it is shown that $\eta_1=0$ is 
an extreme point for $t^{1-\alpha}-r_\alpha(t)$ with negative value. The same lemma states that all the 
$k$ zeros and $k$ poles (denoted by $d_j$) of $r_\alpha$ are real, pairwise different, non-positive, and interlacing. 
Then for the decomposition of $r_\alpha(t)$ into partial fractions
$$r_\alpha(t)=b^\ast_0+\sum_{j=1}^k\frac{c^\ast_j}{t-d_j}$$
we have   $c_j^\ast,d_j<0$, $j=1, \cdots, m$, for more details, see, e.g. \cite[Theorem 1]{harizanov2017positive}.
%Where $c_j^\ast <0$  follows from ??? There is a similar statement  
%in the paper \cite[equation (3.16)]{stahl2003},   but it seems to be for a different approximation ???} 
Hence,
\begin{equation*}
\begin{split}
r'_\alpha(t)=\sum_{j=1}^k\frac{-c^\ast_j}{(t-d_j)^2}>0,\qquad %\forall t\in[0,1];\\
r''_\alpha(t)=\sum_{j=1}^k\frac{2c^\ast_j}{(t-d_j)^3}<0,\qquad\forall t\in[0,1].
\end{split}
\end{equation*}
The proof is completed.
\end{proof}

Applying Lemma~\ref{lemma:BURA monotonicity}, the $(k,k)$-R-BURA approximation error is 
estimated analogously to the BURA error: 
%. We derive the inequality
\begin{equation*}\label{eq:R-BURA error analysis 1}
%\begin{split}
\frac{\|\bfu_r-\bfu \|_2}{\|\bff\|_2}\le \CC^{-\alpha}\max_{\mu_i}\frac{\left|r_{1-\alpha}(\mu_i)-\mu_i^{\alpha}\right|}{\mu^\alpha_i r_{1-\alpha}(\mu_i)}\le\frac{\CC^{-\alpha}E_{1-\alpha,k,k}}{\mu^\alpha_1r_{1-\alpha}(\mu_1)}
\le \frac{E_{1-\alpha,k,k}}{\lambda^\alpha_1r_{1-\alpha}(\mu_1)}.
%&\le \frac{E_{1-\alpha,k,k}}{\lambda^\alpha_1r_{1-\alpha}(\mu_1)}
%=\frac{\CC^{\alpha}E_{1-\alpha,k,k}}{\lambda^\alpha_1(\lambda^\alpha_1- \CC^\alpha E_{1-\alpha,k,k})}.
%\end{split}
\end{equation*}
Note that, $r_{1-\alpha}(t)$
has no zeros inside the interval $t\in[0,1]$, therefore the denominator is strictly positive and the error bound is well-defined. On the other hand, $r_{1-\alpha}(0) = E_{1-\alpha,k,k}$ when 
%the error estimate is not favorable  when 
$h\to 0$ then  $\mu_1 \to 0$ and the error deteriorates. 
The error function $\varepsilon(t)=r_{1-\alpha}(t)-t^\alpha$ has $2k+1$ roots $\{\xi_i\}_1^{2k+1}$ in $(0,1)$, 
due to the $2k+2$ extreme points, including $\{0,1\}$, see, e.g.,  \cite{SS93}. 
Since $\varepsilon(0)=E_{1-\alpha,k,k}>0$, we have 
\begin{equation}\label{eq:R-BURA interval}
r_{1-\alpha}(t)\ge t^\alpha,\qquad\forall t\in[0,\xi_1]\cup\bigcup\limits_{i=1}^k[\xi_{2i},\xi_{2i+1}]. 
\end{equation}
Therefore, whenever $\mu_1$ is a priori estimated (enough to have a good lower bound for $\lambda_1$), 
we can choose a proper $k$, such that 
\begin{equation}\label{eq:R-BURA error analysis}
\frac{\|\bfu_r-\bfu  \|_2}{\|\bff\|_2}\le \frac{E_{1-\alpha,k,k}}{\lambda^{\alpha}_1\mu^{\alpha}_1}= \frac{\CC^{\alpha}E_{1-\alpha,k,k}}{\lambda^{2\alpha}_1},\qquad \mu_1\in\bigcup\limits_{i=1}^k[\xi_{2i},\xi_{2i+1}].
\end{equation}
The case $\mu_1\in[0,\xi_1]$ is more subtle and needs special care. Using $r_{1-\alpha}(t)=t^\alpha+\varepsilon(t)$, with $0<\varepsilon(t)\le E_{1-\alpha,k,k}$, together with the fact that the function $g(\varepsilon):=\varepsilon/(\mu^\alpha_1+\varepsilon)$ is monotonically increasing for $\varepsilon\ge0$ we obtain
$$\frac{\left|r_{1-\alpha}(\mu_1)-\mu_1^{\alpha}\right|}{\mu^\alpha_1 r_{1-\alpha}(\mu_1)} \le \frac{E_{1-\alpha,k,k}}{\mu^\alpha_1(\mu^\alpha_1+E_{1-\alpha,k,k})}\le\frac{E_{1-\alpha,k,k}}{\mu^\alpha_1 E_{1-\alpha,k,k}}.$$
For every $\mu_i>\xi_1$ we have
$$\frac{\left|r_{1-\alpha}(\mu_i)-\mu_i^{\alpha}\right|}{\mu^\alpha_i r_{1-\alpha}(\mu_i)} \le \frac{E_{1-\alpha,k,k}}{\mu^\alpha_i r_{1-\alpha}(\xi_1)}\le \frac{E_{1-\alpha,k,k}}{\mu^\alpha_1 \xi^\alpha_1}.$$
Therefore, since $\xi^{\alpha}_1=r_{1-\alpha}(\xi_1)>r_{1-\alpha}(0)=E_{1-\alpha,k,k}$,
\begin{equation}\label{eq:R-BURA error analysis at 0}
\frac{\|\bfu_r-\bfu \|_2}{\|\bff\|_2}\le \frac{E_{1-\alpha,k,k}}{\lambda^{\alpha}_1\max(\xi^{\alpha}_1,E_{1-\alpha,k,k})}=
\lambda^{-\alpha}_1,\qquad \forall\mu_1\in[0,\xi_1].
\end{equation}
%\red{Please help making the following statement more precise!} 

%\red{I do not think what is written below is correct !!!\\ 
Typically $\lambda_1=\mathrm{O}(1)$ for 
all $h$ and $\mu_1\to 0$ as $h\to0$, thus unlike the BURA case \eqref{eq:BURA error analysis} the 
$(k,k)$-R-BURA relative error is uniformly bounded when $k$ is fixed and $h\to 0$.%}

The asymptotic behavior of the relative error \eqref{eq:R-BURA error analysis} is derived analogously to \eqref{eq: BURA error}:
\begin{equation}\label{eq: R-BURA error}
\lim_{k \to \infty} e^{2\pi\sqrt{\alpha k}} \|\bfu_r-\bfu \|_2 
 = \frac{4^{2-\alpha}\CC^{\alpha}}{\lambda^{2\alpha}_1}\sin(\pi\alpha)\|\bff\|_2. %,\qquad k\to\infty. 
\end{equation}

In our experiments, we work with $r_{1-\alpha}$ in $\mathcal R(k+1,k)$ and $\mathcal R(k+1,k+1)$. 
Similar to BURA-method, the numerical computation of $\bfu_r$ involves solving 
of $k+1$ independent linear systems with nonnegative diagonal shifts of $\calAt$. 
%Even though we do not have a proof, yet, all numerical evidences indicate that Lemma~\ref{lemma:BURA monotonicity} 
%holds true for the type $(k+1,k)$ and so does the whole $(k,k)$ error analysis.   

\subsection{Q-method}
The solver, proposed by Bonito and Pasciak in \cite{BP15}, incorporates an exponentially convergent quadrature 
scheme for the approximate computation of an integral solution representation, i.e., uses the rational function
\begin{equation*}
Q_\alpha(t):=\frac{2k'\sin(\pi\alpha)}{\pi}\sum_{\ell=-m}^M 
\frac{e^{2(\alpha-1)\ell k'}}{t+e^{-2\ell k'}},\qquad t\in(0,\infty), 
\end{equation*}
where $m=\lceil (1-\alpha)k\rceil$, $M=\lceil \alpha k\rceil$, $k'=\pi/(2\sqrt{\alpha(1-\alpha)k})$. Since 
$$\lceil (1-\alpha)k\rceil+\lceil \alpha k\rceil=\left\{\begin{array}{lcl} k+1, && \alpha k\notin\ZZ\\k, && \alpha k\in\ZZ\end{array}\right.$$
$Q_\alpha$ is either a $(k+1,k+1)$ or a $(k+2,k+2)$ rational function. The approximant of $\bfu_h$ has the form
\begin{equation}\label{eq:Q-method}
\bfu_{Q}:=\frac{2k'\sin(\pi\alpha)}{\pi}\sum_{\ell=-m}^M 
e^{2(\alpha-1)\ell k'}\left(\calAt+e^{-2\ell k'} \calIt \right)^{-1}\bff. 
\end{equation}
The parameter $k'>0$ controls the accuracy of $\bfu_Q$ and the number of linear systems to be solved. 
For example, $k'=1/3$ gives rise to 120 systems for $\alpha=\{0.25,0.75\}$ and $91$ systems for $\alpha=0.5$ 
guaranteeing  $\|\bfu_Q-\bfu \|_2  \approx %\mathrm{O}(2^{-7})
10^{-7}\|\bff\|_2$. We have 
\begin{equation}\label{eq:Q-method error analysis}
\frac{\|\bfu_Q-\bfu \|_2}{\|\bff\|_2}\le\max_{\lambda_i}\left|Q_\alpha(\lambda_i)-\lambda_i^{-\alpha}\right|
\approx \left|Q_\alpha(\lambda_1)-\lambda^{-\alpha}_1\right|.
\end{equation}

Finally, the error analysis, developed in \cite{BP15} states
\begin{equation}\label{eq: Q error}
\lim_{k \to \infty} e^{\pi\sqrt{\alpha(1-\alpha)k}}  \|\bfu_Q-\bfu \|_2 
= \frac{2\sin(\pi\alpha)}{\pi}\left(\frac{1}{\alpha}+
\frac{1}{(1-\alpha)\lambda_1}\right)\|\bff\|_2. %, \qquad k\to \infty. 
\end{equation}
\begin{remark}
Varying the quadrature formulae, a family of related methods 
can be obtained. For example, Gauss-Jacobi quadrature rule is 
used to approximate the integral representation of the solution 
in \cite{Aceto_17}.
\end{remark}
\subsection{Comparison of the three solvers}
%%%%%%%%%%%%%%%%%%%%%%%%%%%%%%%%%%
Comparing \eqref{eq: BURA error} with \eqref{eq: Q error} we observe exponential decay of both errors with respect to the number of linear systems to be solved. The exponential order of the BURA 
estimate is at least twice higher than the one for the quadrature rule, but there is a multiplicative 
factor $\CC^{1-\alpha}$ in \eqref{eq: BURA error}, which depends on the mesh size $h$ and $\CC\to\infty$ as $h\to 0$. 
This implies trade-off between numerical accuracy and computational efficiency for the BURA method. 
The choice of $k$ for $r_\alpha \in \mathcal R (k,k)$ should be synchronized with $h$, while the size of $h$ 
does not affect the choice of $k$ for $Q_\alpha$. Another difference between the two approaches is that the 
error bound in \eqref{eq: BURA error} is unbalanced and can be reached only for $\bff=\bPsi_1$ and only if 
$\CC^{-1}\lambda_1$ is an extreme point for $r_\alpha$ (see \eqref{eq:BURA error analysis}), while the 
error bound in \eqref{eq: Q error} is balanced. Hence, the BURA error heavily depends on the decomposition 
of the right-hand-side $\bff$ along $\{\bPsi_i\}$ and possesses a wide range of values, while the 
quadrature error is  independent on $\bff$. 

The errors in \eqref{eq:BURA error analysis} and \eqref{eq:R-BURA error analysis} are bounded by the expressions 
$\CC^{1-\alpha}E_{\alpha,k,k}/\lambda_1$ and $\CC^\alpha E_{1-\alpha,k,k}/\lambda^{2\alpha}_1$. 
Since $E_{\alpha,k,k}$ is monotonically increasing  function with respect to $\alpha$ and $\CC,\lambda_1>1$, 
for $\alpha>0.5$ the R-BURA method provides better theoretical error bounds, while for $\alpha<0.5$ so 
does the BURA one. %Therefore, R-BURA is not re-commendatory for this setting. 
In the case $\alpha=0.5$ the two approaches behave similarly. The drawback for the R-BURA 
method is the additional condition on $k$ and $h$, namely $\mu_1\in\bigcup\limits_{i=1}^k[\xi_{2i},\xi_{2i+1}]$. 
On the other hand, if we can guarantee this, then the R-BURA method has some advantage, as we solve one linear 
system less ($k+1$ for BURA vs $k$ for R-BURA, when using the same $(k,k)$ function $r_{0.5}(t)$). 
Below we provide an experimental comparison of these approaches for various $h$ and $k=\{7,8,9\}$.  

\section{Numerical tests: comparative analysis and proof of concept}\label{sec:tests}
%%%%%%%%%%%%%%%%%%%%%%%%%%%%%%%%%%% 6 pages
% The 1D subsection includes comparison of BURA and R-BURA with respect to
% accuracy for eigenfunctions.The 2D tests could repeat the approach from
% NLAA paper, now including three methods.
We consider the fractional Laplace problem with homogeneous boundary conditions \eqref{FracLaplace} in both 1-D and 2-D. 
In 1-D we use the well-known eigenvectors and eigenvalues of the corresponding SPD matrix for experimental validation of the theoretical error analysis.
%In 2-D, we again know the eigenvalues and the eigenvectors but applying spectral decomposition of $\bff$ is tremendously expensive computationally. 
In 2-D we investigate the relation between numerical accuracy and computational efficiency of the considered three solvers.

\subsection{Algorithm for computing BURA} \label{ss:Remez}
%%%%%%%%%%%%%%%%%%%%%%%%%%%%%%%%%%

Following \cite{HLMMV18} we consider $\alpha=\{0.25,0.5,0.75\}$ and investigate methods with similar computational 
efficiency.  The rational functions $r_\alpha(t)$ are computed using the modified Remez algorithm, 
described in \cite[Section 3.2]{HLMMV18}. In the case $\alpha=0.25$ we compare $(k,k)$-BURA with the 
$k$-Q-method, $k=9$. The corresponding 
numerical solvers incorporate $10$, respectively $11$ linear systems with positive diagonal shifts of $\calAt$. 
In the cases $\alpha=\{0.5,0.75\}$ we compare $(k,k)$-BURA, 
$(k+1,k)$-R-BURA, $(k+1,k+1)$-R-BURA, and $k$-Q-method for $k=7$. 
This gives rise to $k+1$ linear systems with positive diagonal shifts of $\calA$ for the first two 
methods and $k+2$ linear systems with positive diagonal shifts for the last method. 

\begin{table}[h!]
\caption{ Errors $E_{\alpha,k,m}$ of $r_\alpha(t)$ for $t\in[0,1]$, used in the analysis of 
 BURA and R-BURA numerical methods.}\label{t:error}
\centering
\begin{tabular}{|c|c|c|c|c|c|c|c|c|}
\hline
$\alpha$&$E_{\alpha,5,5}$&$E_{\alpha,6,6}$&$E_{\alpha,7,7}$&
$E_{\alpha,8,7}$& $E_{\alpha,8,8}$ & $E_{\alpha,9,8}$ & $E_{\alpha,9,9}$ & $E_{\alpha,10,9}$\\ \hline
  0.25 & 2.8676e-5 & 9.2522e-6 & 3.2566e-6 & 1.9500e-6 & 1.2288e-6 & 7.5972e-7 & 4.9096e-7 & 3.1128e-7\\ 
  0.50 & 2.6896e-4 & 1.0747e-4 & 4.6037e-5 & 3.0789e-5 & 2.0852e-5 & $\ast$ & $\ast$ & $\ast$
  \\
  0.75 & 2.7348e-3 & 1.4312e-3 & 7.8269e-4 & $\ast$ & $\ast$ & $\ast$ & $\ast$ & $\ast$\\\hline
\end{tabular}
\end{table}

The maximal approximation errors of the involved BURA functions are summarized in 
Table~\ref{t:error}.  We use $\ast$ to indicate errors that cannot be computed when the 
Quadruple-precision floating-point format is applied for the arithmetics. 
The first four zeros of the associated functions 
$\varepsilon(t)=r_{1-\alpha}(t)-t^\alpha$ are presented in Table~\ref{tab:R-BURA roots}. 
%\blue{We have evaluated $\varepsilon(t)$ over a uniform grid with step size $10^{-10}$, 
%hence this is the numerical accuracy we can guarantee for the zeros.}
Note that they are needed only in the analysis of
R-BURA setting, thus we exclude %do not consider the case 
$\alpha=0.25$ where R-BURA behaves worse than BURA. % and we do not apply it. 
%
%\begin{table}[t]
%\centering
%\caption{First four roots of $\varepsilon(t)$ for   $r_{1-\alpha} \in \mathcal R(k,m)$.
%% with various $(k,m)$. 
%%\red{Numerical accuracy used: $10^{-10}$. What is this ???}
%}\label{tab:R-BURA roots}
% \begin{tabular}{|c|c|c|c|c|c|}
%\hline
%\multirow{2}{*}{$\alpha$} & \multirow{2}{*}{$(k,m)$} & \multicolumn{4}{|c|}{First zeros of $\varepsilon(t)=r_{1-\alpha}(t)-t^\alpha$}\\ \cline{3-6}
%& & $\xi_1$ & $\xi_2$ & $\xi_3$ & $\xi_4$\\ \hline
%\multirow{5}{*}{$\alpha=0.50$} & 
%              $(5,5)$ & 1.030e-7 & 6.732e-6 & 6.592e-5 & 4.352e-4\\
%            & $(6,6)$ & 1.650e-8 & 1.076e-6 & 1.053e-5 & 6.950e-5\\
%            & $(7,7)$ & 3.100e-9 & 1.981e-7 & 1.932e-6 & 1.275e-5\\
%            & $(8,7)$ & 1.400e-9 & 8.840e-8 & 8.644e-7 & 5.705e-6\\
%            & $(8,8)$ & 7.00e-10 & 4.070e-8 & 3.967e-7 & 2.617e-6\\\hline
%\multirow{5}{*}{$\alpha=0.75$} &
%              $(5,5)$ & 2.185e-6 & 7.269e-5 & 5.004e-4 & 2.353e-3\\
%            & $(6,6)$ & 4.836e-7 & 1.609e-5 & 1.108e-4 & 5.216e-4\\
%            & $(7,7)$ & 1.202e-7 & 3.999e-6 & 2.754e-5 & 1.297e-4\\
%            & $(8,7)$ & 6.070e-8 & 2.019e-6 & 1.390e-5 & 6.544e-5\\
%            & $(8,8)$ & 3.280e-8 & 1.091e-6 & 7.509e-6 & 3.536e-5\\\hline
%%             & $(9,8)$ & 1.730e-8 & 5.743e-7 & 3.955e-6 & 1.862e-5
%%             & $(9,9)$ & 9.700e-9 & 3.217e-7 & 2.210e-6 & 1.041e-5
%%             &$(10,9)$ & 5.300e-9 & 1.753e-7 & 1.208e-6 & 5.686e-6
%\end{tabular}
%\end{table}
%

%\red{ Do you like better this Table ???}

\begin{table}[t]
\centering
\caption{First four roots of $\varepsilon(t)$ for   $r_{1-\alpha} \in \mathcal R(k,m)$.
% with various $(k,m)$. 
%\red{Numerical accuracy used: $10^{-10}$. What is this ???}
}\label{tab:R-BURA roots}
 \begin{tabular}{|c||c|c|c|c||c|c|c|c|}
\hline
%\multirow{2}{*}{$\alpha$} & 
\multirow{2}{*}{} & \multicolumn{8}{|c|}{First four zeros $\xi_1, \xi_2, \xi_3, \xi_4$ of $\varepsilon(t)=r_{1-\alpha}(t)-t^\alpha$ 
%for $\alpha=\frac12, \frac34$ 
}\\ 
\cline{2-9}
%\multirow{2}{*}{$\alpha$} & 
\multirow{2}{*}{$(k,m)$} & \multicolumn{4}{|c||}{$\alpha=0.50$}  & \multicolumn{4}{|c|}{$\alpha=0.75$}  \\ 
\cline{2-9} & $\xi_1$ & $\xi_2$ & $\xi_3$ & $\xi_4$  &  $\xi_1$ & $\xi_2$ & $\xi_3$ & $\xi_4$ \\ 
\hline
\multirow{9}{*}{}  
             $(5,5)$ & 1.030e-7 & 6.732e-6 & 6.592e-5 & 4.352e-4  & 2.185e-6 & 7.269e-5 & 5.004e-4 & 2.353e-3 \\
             $(6,6)$ & 1.650e-8 & 1.076e-6 & 1.053e-5 & 6.950e-5 & 4.836e-7 & 1.609e-5 & 1.108e-4 & 5.216e-4 \\
             $(7,7)$ & 3.100e-9 & 1.981e-7 & 1.932e-6 & 1.275e-5 & 1.202e-7 & 3.999e-6 & 2.754e-5 & 1.297e-4 \\
             $(8,7)$ & 1.400e-9 & 8.840e-8 & 8.644e-7 & 5.705e-6  & 6.070e-8 & 2.019e-6 & 1.390e-5 & 6.544e-5  \\
             $(8,8)$ & 7.00e-10 & 4.070e-8 & 3.967e-7 & 2.617e-6 & 3.280e-8 & 1.091e-6 & 7.509e-6 & 3.536e-5 \\\hline
\end{tabular}
\end{table}

\subsection{Numerical results for 1-D fractional Laplace problem}\label{ss:1D}
%%%%%%%%%%%%%%%%%%%%%%%%%%%%%%%%%%
%The numerical results for the  are illustrated 
For this problem we have $\lambda_1=(4/h^2)\sin^2(\pi h/2)$, $\CC=4/h^2$,  and  $\mu_1=\sin^2(\pi h/2)$. 
The  numerical results are  given on Figures \ref{fig1} -- \ref{fig2}.

First, we solve the system
%
%On  we present the error of solving the system 
$\calAt^\alpha \bfu=\bPsi_1$. The 
theoretical errors of the three methods, given by 
%we present the errors 
$\CC^{-\alpha}|\mu^{-1}_1 r_\alpha(\mu_1) - \mu^{-\alpha}_1|$ of BURA,
$\CC^{-\alpha}|r_{1-\alpha}(\mu_1)-\mu_1^{\alpha}|/(\mu^\alpha_1 r_{1-\alpha}(\mu_1))$ of R-BURA, 
and $|Q_\alpha(\lambda_1)-\lambda^{-\alpha}_1|$ of the Q-method,  are presented as function of 
$h\in[10^{-7},10^{-2}]$  on Figure \ref{fig1}.  The numerical results in each graph are obtained by 
comparable computational complexity (expressed through the number of systems solved).

\begin{figure}[h!]
\begin{tabular}{ccc}
\includegraphics[width=0.31\textwidth]{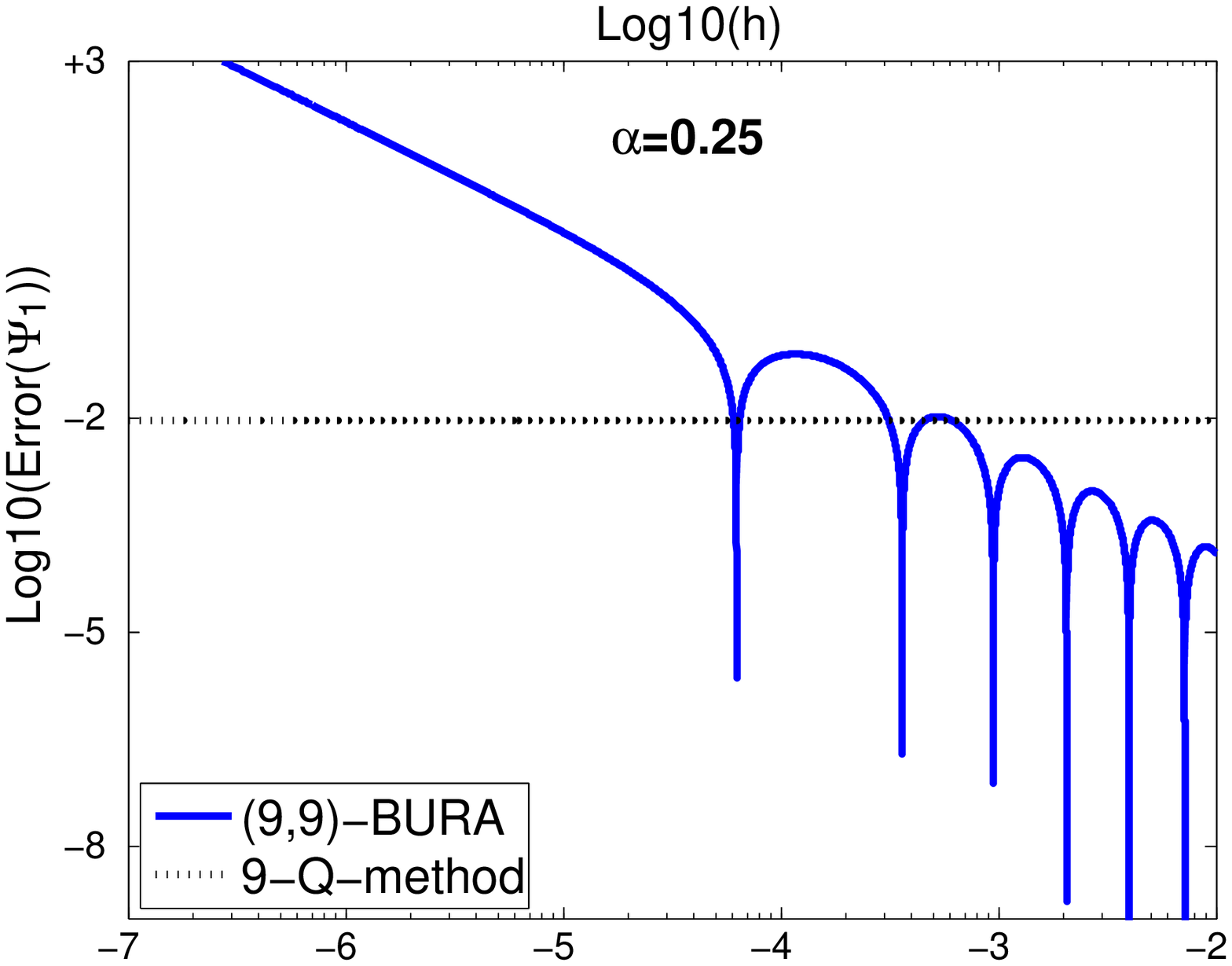} &
\includegraphics[width=0.31\textwidth]{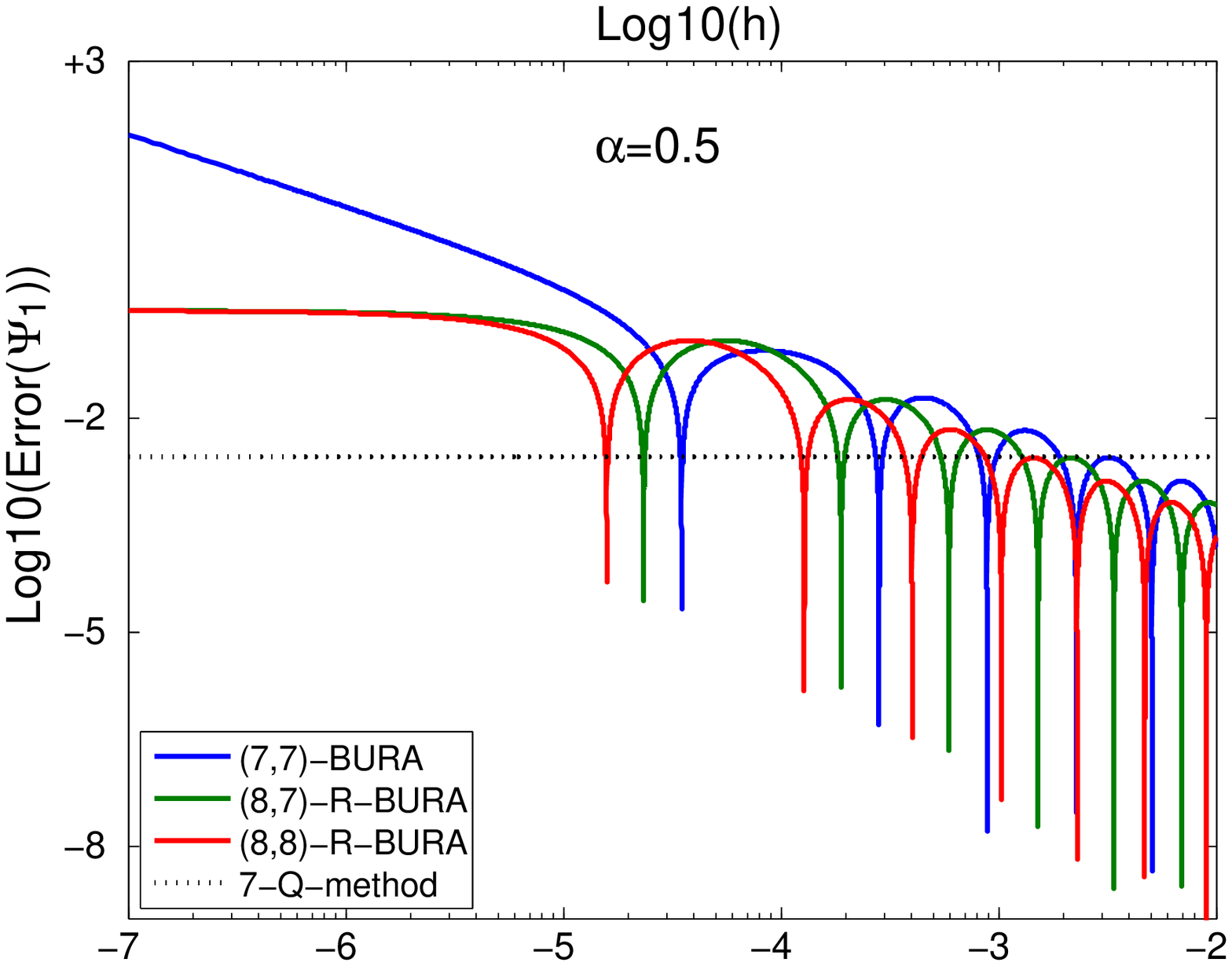} &
\includegraphics[width=0.31\textwidth]{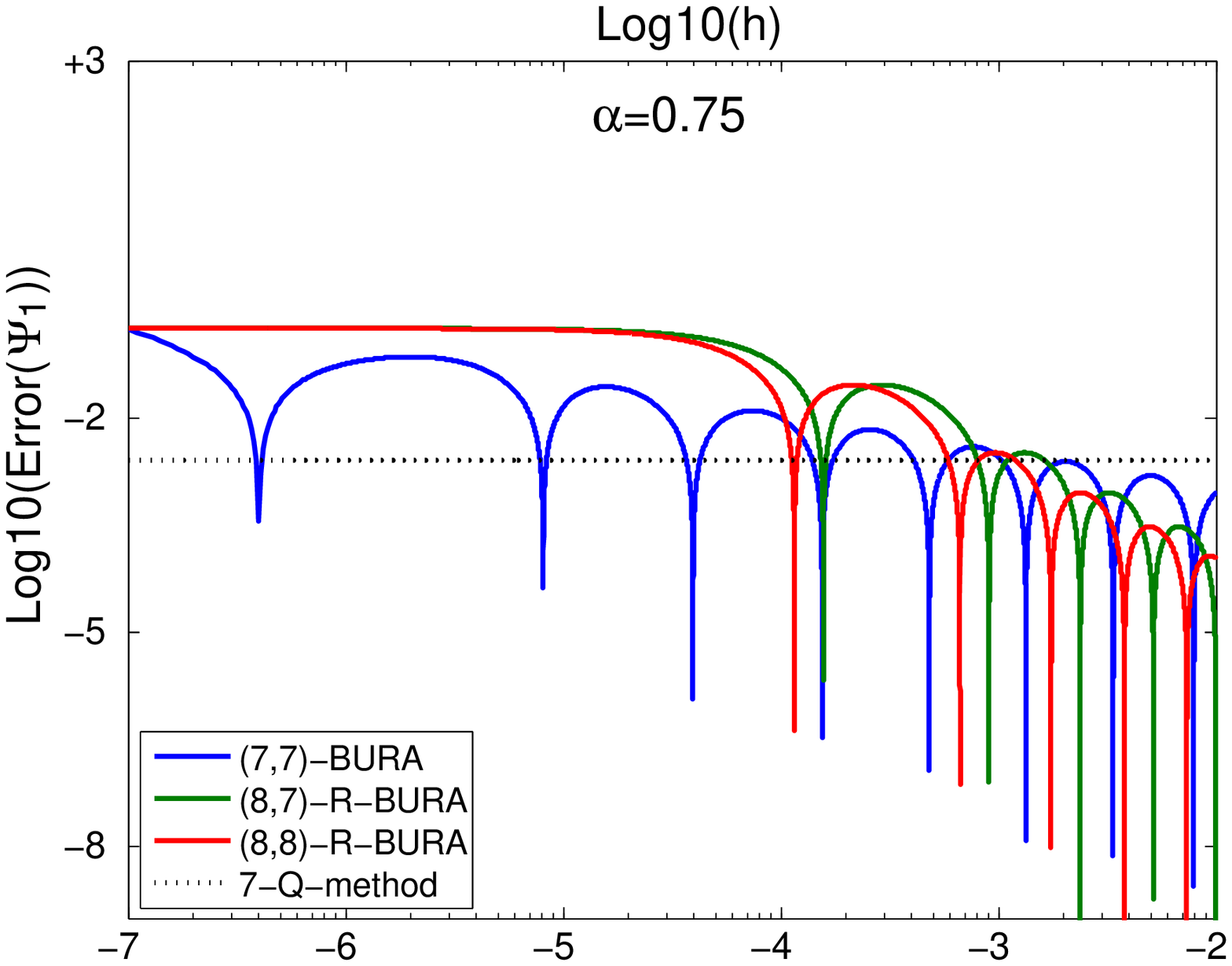} 
\end{tabular}
\caption{Comparison of the theoretical error bounds $\|\bfu_r-\bfu\|_2/\|\bff\|_2$, $\bff=\bPsi_1$, of the three solvers with respect to $h$ for the 1-D fractional 
Laplacian.}\label{fig1}
\end{figure}

% The resulting estimations of the theoretical 1-D relative error (as a function on $h$) for 
% \blue {the examined number of solves of systems like $(\calAt  + c \calIt )\bfv=\bff$}
% are plotted on Fig.~\ref{fig1}. 
% The presented data illustrate the
% estimates \eqref{eq:BURA error analysis}, 
% \eqref{eq:R-BURA error analysis}, and \eqref{eq:R-BURA error analysis at 0}. 
The oscillating behavior 
of the BURA-related error is due to the placement of $\mu_1$ with respect to the extreme points of 
$\varepsilon(t)$. When $\alpha=0.75$, $k=8$ (right plot) and $h<10^{-4}$ we observe the constant asymptotic behavior 
of the R-BURA errors towards $\pi^{-2\alpha}$ as $h\to 0$. Similar observation is made for $\alpha=0.5$ 
and $h<2\cdot10^{-5}$. Since $\mu_1\approx \pi^2h^2/4$, we have that $\mu_1\approx2.5\cdot10^{-8}$ for 
$\alpha=0.75$ and $h=10^{-4}$, which, as seen from Table~\ref{tab:R-BURA roots}, is close to the first zero $\xi_1$ 
of $\varepsilon(t)$ for the corresponding $(8,7)-$ and $(8,8)-$ approximations $r_{0.25}(t)$. 
This asymptotic behavior perfectly agrees with the 
error estimate \eqref{eq:R-BURA error analysis at 0}. The same analysis can be made for $\alpha=0.5$ 
and $h=2\cdot10^{-5}$, where $\mu_1\approx5\cdot10^{-10}$.  
The Q-method errors are independent of $h$. We observe that for $\alpha=0.5$ the BURA and R-BURA 
solvers have comparable accuracy over the whole interval $(0,1]$. For $\alpha=0.75$ and $h\in[10^{-4},10^{-3}]$, 
we observe that both $(8,7)-$ and $(8,8)$-R-BURA functions give worse relative errors than the $(7,7)$-BURA function, 
since $\mu_1\in[\xi_1,\xi_2]$ (see Table~\ref{tab:R-BURA roots}).

\begin{figure}[h!]
\begin{tabular}{ccc}
\includegraphics[width=0.31\textwidth]{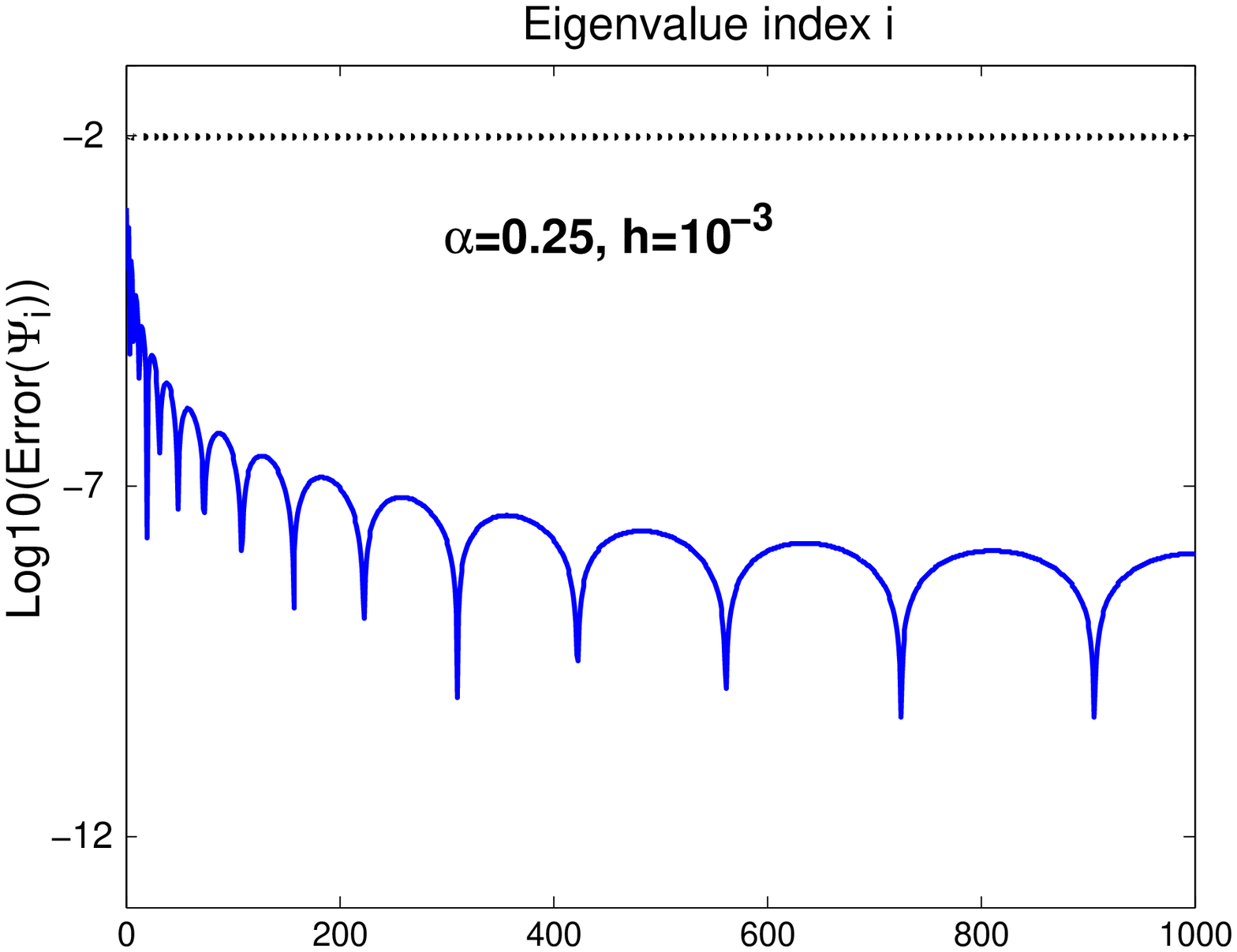} &
\includegraphics[width=0.31\textwidth]{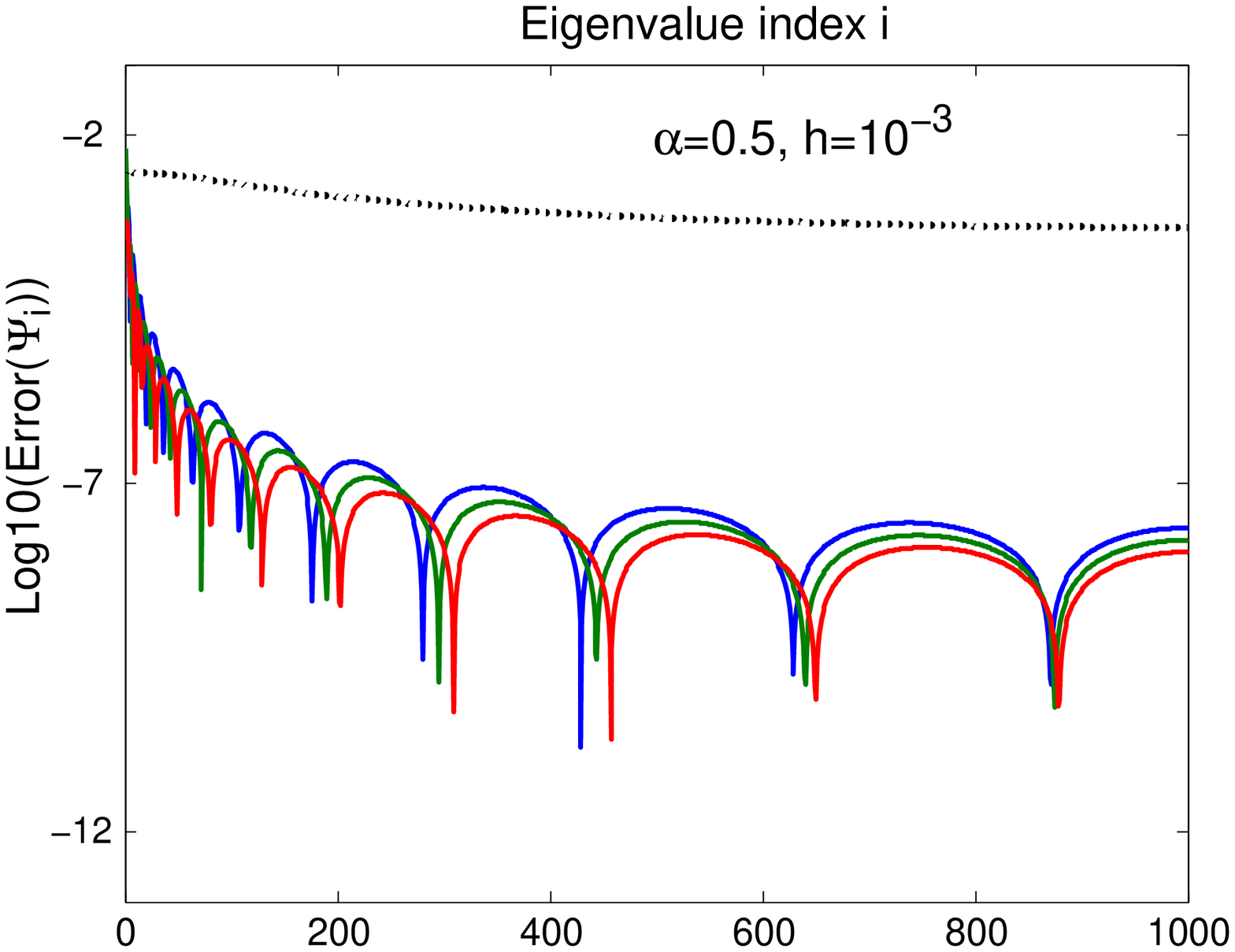} &
\includegraphics[width=0.31\textwidth]{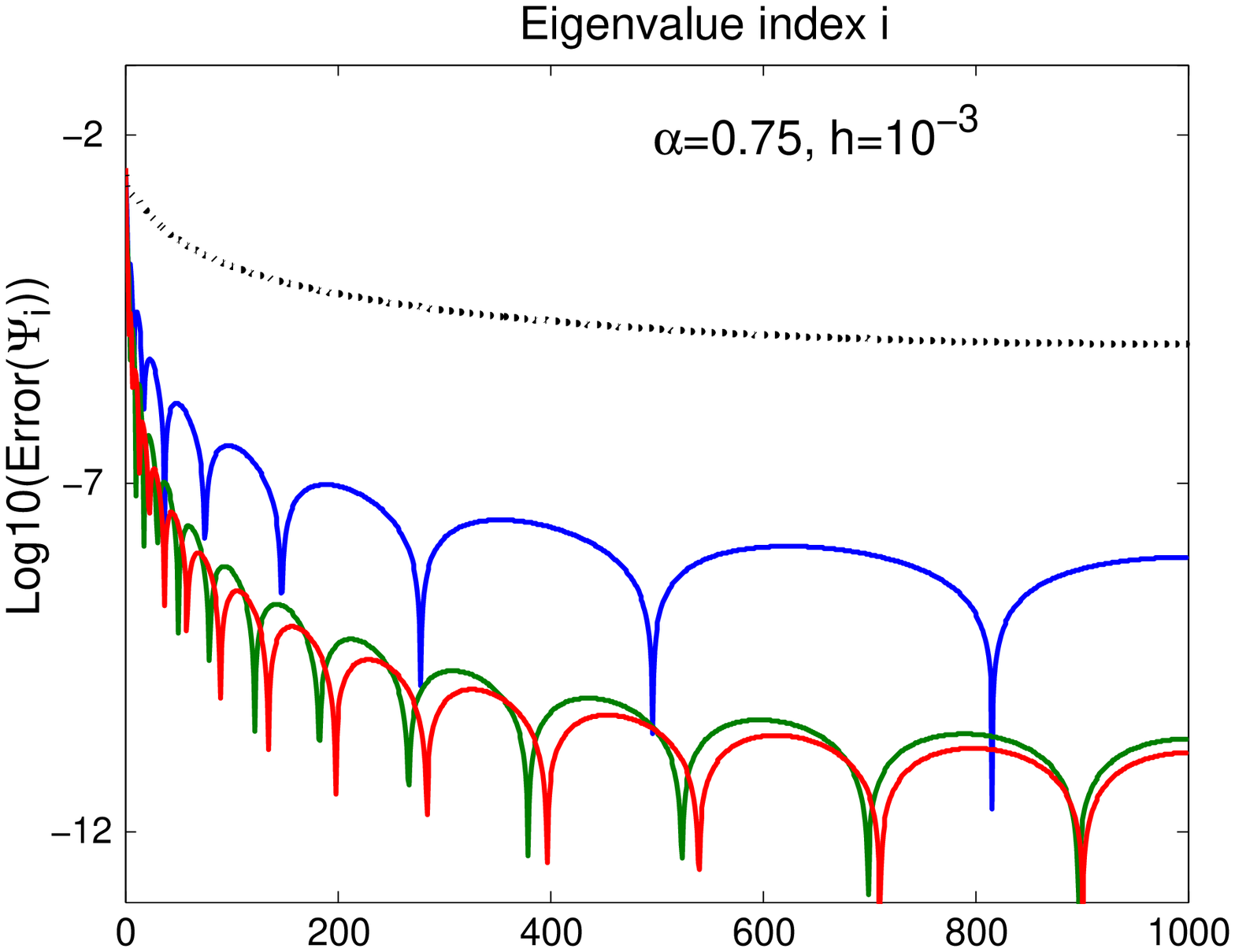}\\
\includegraphics[width=0.31\textwidth]{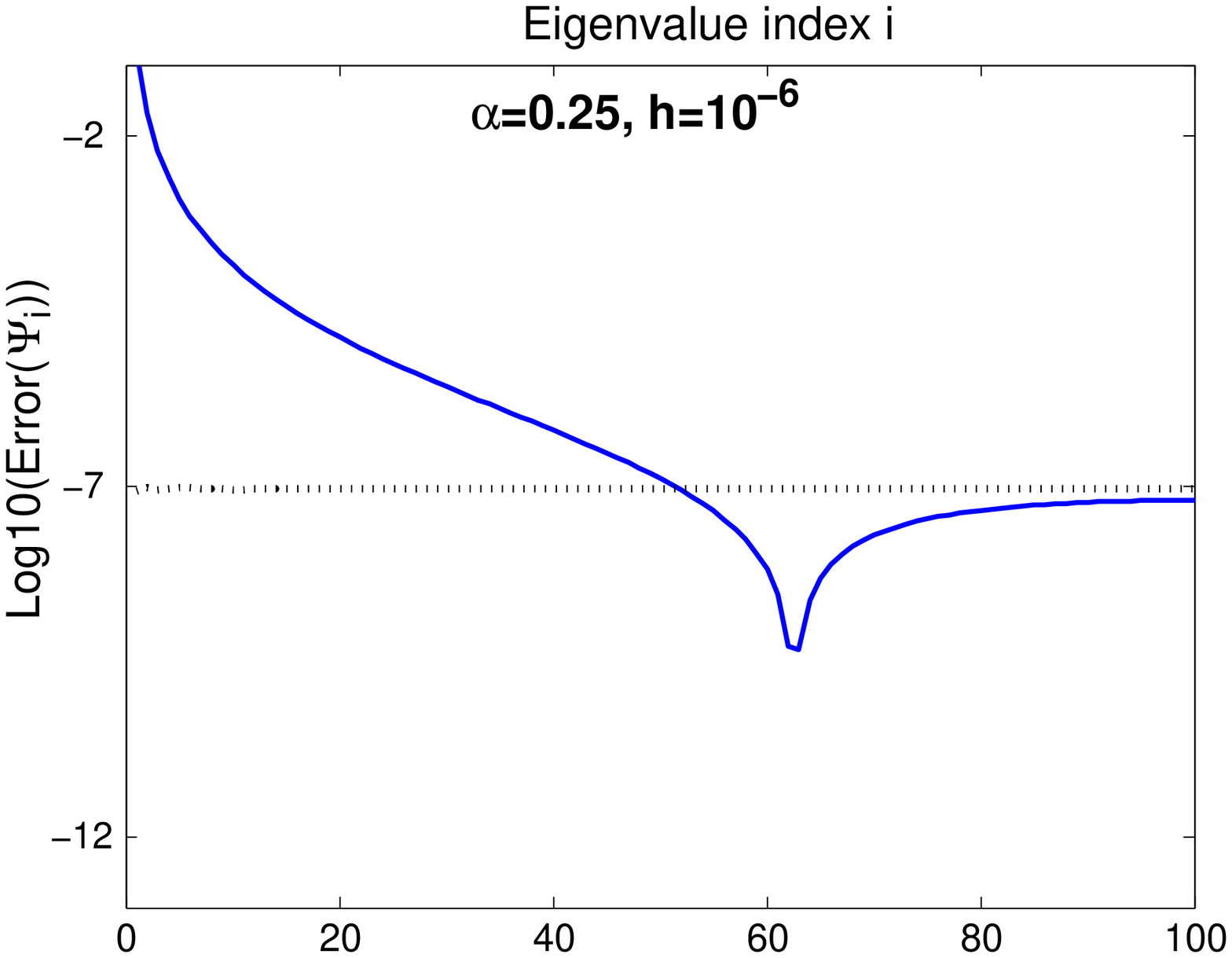} &
\includegraphics[width=0.31\textwidth]{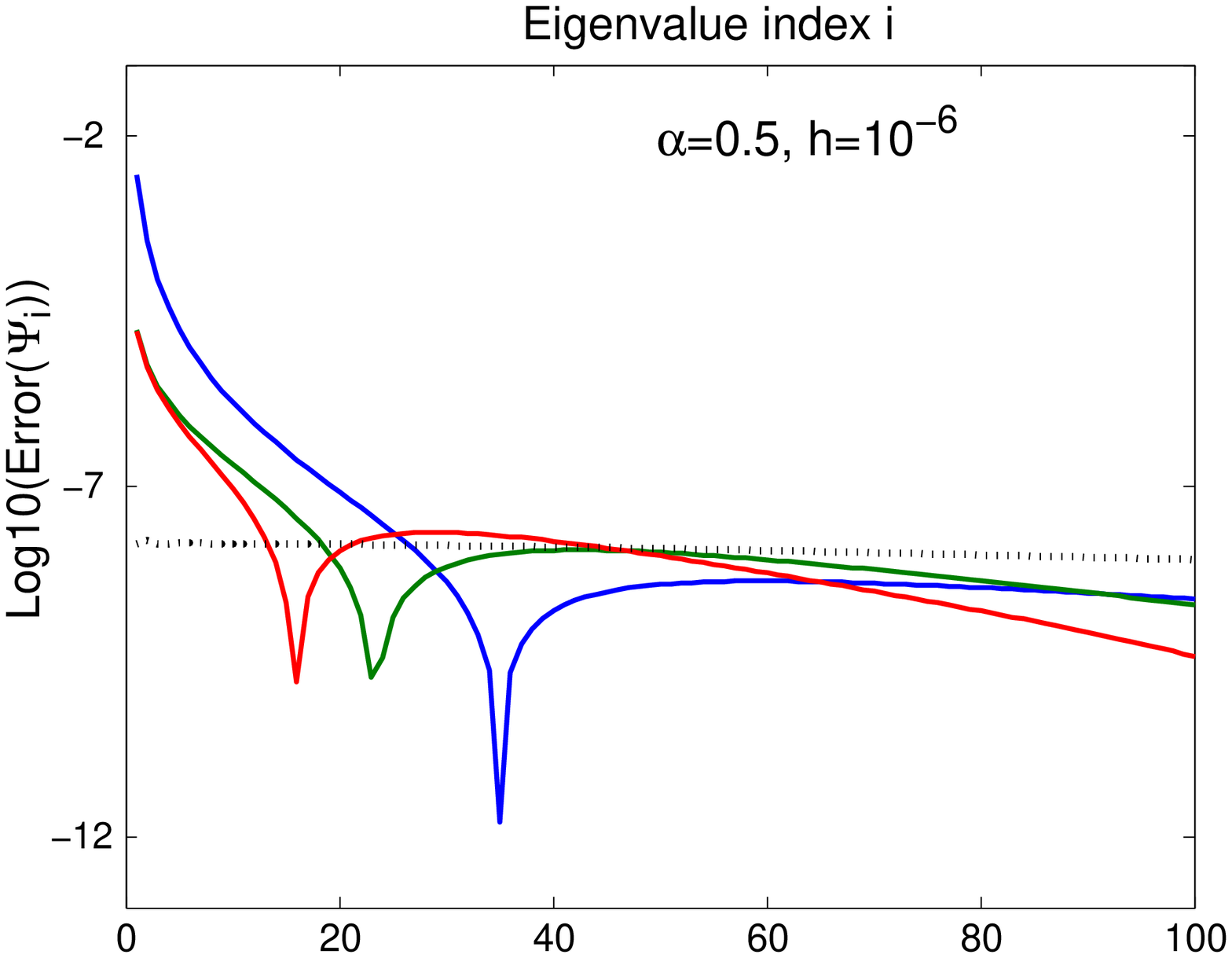} &
\includegraphics[width=0.31\textwidth]{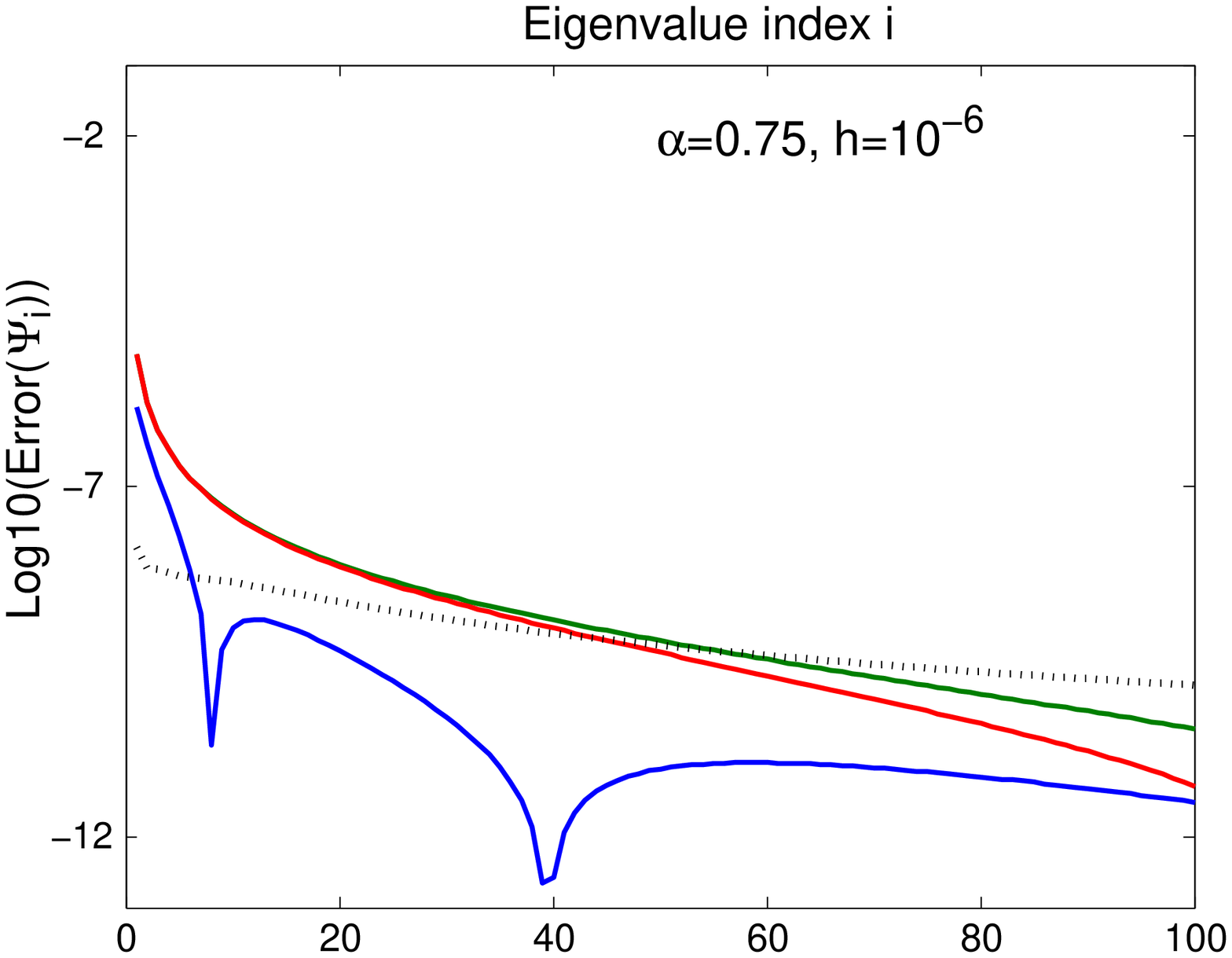}
\end{tabular}
\caption{Spectral decomposition of the error for the 1-D fractional Laplace problem with $h=10^{-3}$ (top) and 
$h=10^{-6}$ (bottom). Colors are with respect to the legend of Fig.~\ref{fig1}. Top: for each $i$ we plot the 
corresponding errors $\|\bfu_r-\bfu_Q\|_2/\|\bff\|_2$ for $\bff=\bPsi_i$.  The bottom plots present the corresponding errors for 
the first $100$ eigenvectors, i.e., for $\{\bPsi_i\}_1^{100}$ .}
\label{fig2}
\end{figure}

The second set of experiments deals with the error over the whole spectrum of $\calAt$ and is presented on 
Figure~\ref{fig2}. For $h=10^{-3}$ and $h=10^{-6}$ we compute 
$\CC^{-\alpha}|\mu^{-1}_i r_\alpha(\mu_i) - \mu^{-\alpha}_i|$, 
$\CC^{-\alpha}|r_{1-\alpha}(\mu_i)-\mu_i^{\alpha}|/(\mu^\alpha_i r_{1-\alpha}(\mu_i))$, 
and $|Q_\alpha(\lambda_i)-\lambda^{-\alpha}_i|$ for all $i$, which is equivalent to letting the 
right-hand-side in \eqref{FD4Laplace} run over the eigenvectors of $\calAt$ ($\bff=\bPsi_i$). 
%Therefore we  have the $\ell_2$ errors for all solvers. 
The plots on the first row illustrate the complete spectral decomposition of the error for $h=10^{-3}$. 
For $h=10^{-6}$ we show the spectral error over the first $1\%$ of the eigen-modes on the second row. 
The unbalanced behavior of the BURA-related errors in contrast to the 
balanced behavior of the errors of the Q-method is clearly observed. 
High-frequency modes are practically perfectly 
reconstructed by the R-BURA methods, while the low-frequency ones lead to larger errors. 
When $h=10^{-3}$ and $k$ is chosen 
accordingly, all BURA-related errors are smaller than the corresponding Q-related errors.
%, meaning that for such $k$ the first class of solvers is more accurate than the second one.
% for arbitrary choice of the right-hand-side $\bff$. 
When $h=10^{-6}$ and $k$ is chosen poorly, then the BURA and R-BURA errors on the first 
several eigenvectors can be significantly larger than the corresponding Q-method errors.
%, meaning that for such $k$ the second class of solvers is more accurate than the first one in general. 
However, among a million of eigenvectors, the Q-method outperforms the BURA methods on not more than $50$ of them.  Comparing BURA to R-BURA approaches, we experimentally confirm that 
the two methods behave similarly when $\alpha=0.5$, while R-BURA is better for $\alpha=0.75$.

\subsection{2-D numerical experiments}\label{ss:2D}
%%%%%%%%%%%%%%%%%%%%%%%%%%%%%%%%%%
%In 2-D the eigenvalues are sums of the corresponding two 1-D eigenvalues. 
We consider the finite difference approximation of \eqref{FracLaplace} with two different r.h.s., namely, $f_1$ and $f_2$: 
% We consider two different test functions $f_1$ and $f_2$
\begin{equation}\label{eq: rhs}
f_1(x,y)=\left\{\begin{array}{rl} 1,& \text{if } (x-0.5)(y-0.5)>0,\\ -1,& \text{otherwise}.\end{array}\right.\quad
f_2(x,y)=\cos(\pi hx)\cos(\pi hy)
\end{equation}
%where $(x,y)\in(0,1)^2$, and their corresponding discretizations are denoted by $\bff_1$, $\bff_2$. 
The function $f_1$ has a jump discontinuity along $x=0.5$ and $y=0.5$ and has already been used as a test function in this framework \cite{BP15,HLMMV18}. In this case $\lambda_1 %=8h^{-2}\sin^2(\pi h/2)
\approx2\pi^2$, 
$\CC=\|\calAt\|_\infty=8h^{-2}$, and $\mu_1=\sin^2(\pi h/2)$.
%
%first one is the checkerboard function. It  The second one is a tensor product of cosine functions, thus highly regular. 
%Due to the homogeneous boundary conditions, there are jump  discontinuities along $\partial\Omega$ in both examples. 
%\blue {Although possible, the numerical derivation of the exact solution $\bfu$ is not a general approach.} 
%Therefore, we do not do it,, but use a more general approach
The reference solution $\bfu_Q$ is generated by the Q-method with $k^\prime=1/3$ on a fine mesh with 
mesh-size $h=2^{-12}$. 
%Results are summarized in Tables~\ref{tab:2D results 1}--\ref{tab:2D results 3}. 
Note that $\bfu_Q$ is an approximation to the exact solution $\bfu$ with six correct digits, 
$\|\bfu_Q-\bfu\|_2/\|\bff\|_2 \approx 10^{-7}$. 
The numerical results are summarized in Tables~\ref{tab:2D results 1}--\ref{tab:2D results 3}. 
%All the numbers in the tables are of lower order, 
%meaning that this will also be the order of the corresponding relative errors when $\bfu_Q$ is replaced by $\bfu$. 
The presented relative $\ell_2$-errors illustrate the theoretical analysis, while $\ell_\infty$-errors are given as 
additional information.
 
% with $k'=1/3$ as a reference solution, instead. The reference solution is computed 
%on the finest mesh $h=2^{-12}$ and then projected on the corresponding coarser meshes.

\begin{table}[h!]
\centering
\caption{Relative errors for various discretization levels and $\alpha=0.25$. 
%We consider $\bfu_Q$ for $k'=1/3$ and mesh size $h=2^{-12}$ as a reference solution.
}\label{tab:2D results 1}
 \begin{tabular}{|c|c|cc|cc|cc|cc|}
\hline
\multirow{3}{*}{\hspace{1ex}$k$\hspace{1ex}} & \multirow{3}{*}{$h$} & \multicolumn{4}{|c|}{Checkerboard rhs} & \multicolumn{4}{|c|}{Tensor product cosine rhs}\\ \cline{3-10}
& & \multicolumn{2}{|c|}{$(k,k)$-BURA} & \multicolumn{2}{|c|}{$k$-Q-method} & \multicolumn{2}{|c|}{$(k,k)$-BURA} & \multicolumn{2}{|c|}{$k$-Q-method} \\ 
& &  $\ell_2$ & $\ell_\infty$ &  $\ell_2$ & $\ell_\infty$ &  $\ell_2$ & $\ell_\infty$ &  $\ell_2$ & $\ell_\infty$ \\ \hline
\multirow{5}{*}{$9$} & 
  $2^{-8}$  & 5.863e-3 & 5.236e-2 & 1.080e-2 & 4.285e-2 & 2.781e-4 & 2.600e-3 & 6.823e-3 & 9.381e-3 \\
& $2^{-9}$  & 2.823e-3 & 3.234e-2 & 9.707e-3 & 2.425e-2 & 1.441e-4 & 1.813e-3 & 6.752e-3 & 8.586e-3 \\
& $2^{-10}$ & 1.253e-3 & 1.785e-2 & 9.436e-3 & 1.870e-2 & 2.268e-4 & 1.210e-3 & 6.726e-3 & 7.984e-3 \\
& $2^{-11}$ & 5.027e-4 & 7.443e-3 & 9.381e-3 & 1.383e-2 & 4.888e-4 & 7.707e-4 & 6.717e-3 & 7.412e-3 \\
& $2^{-12}$ & 4.883e-3 & 1.019e-2 & 9.374e-3 & 9.568e-3 & 4.425e-3 & 5.031e-3 & 6.713e-3 & 6.790e-3 \\\hline   
\end{tabular}
\end{table} 
The balanced error distribution along the full spectrum for the Q-method gives rise to stable relative errors, 
independent of $h$ for all $\alpha$ on both examples. 
The error distribution for BURA and R-BURA methods depends on $h$ and $k$. 
From Table~\ref{tab:2D results 1}  we see that for $\alpha=0.25$ and $h \ge 2^{-12}$ the choice 
$k=9$ for the $(k,k)$-BURA method has lower $\ell_2$-error %of BURA-solver is always lower 
than the error of the Q-solver for comparable computational work.  

{\small
\begin{table}[t!]
\centering
\caption{Relative errors for $\bff_1$ on various discretization levels and $\alpha=\{0.5,0.75\}$. 
%We consider $\bfu_Q$ for $k'=1/3$ and mesh size $h=2^{-12}$ as a reference solution.
}\label{tab:2D results 2}
 \begin{tabular}{|c|c|cc|cc|cc|cc|}
\hline
\multirow{3}{*}{$(\alpha,k)$} & \multirow{3}{*}{$h$} & \multicolumn{8}{|c|}{Checkerboard right-hand-side}\\ \cline{3-10}
& & \multicolumn{2}{|c|}{$(k,k)$-BURA} & \multicolumn{2}{|c|}{$(k+1,k)$-R-BURA} & \multicolumn{2}{|c|}{$(k+1,k+1)$-R-BURA} & \multicolumn{2}{|c|}{$k$-Q-method} \\ 
& &  $\ell_2$ & $\ell_\infty$ &  $\ell_2$ & $\ell_\infty$ &  $\ell_2$ & $\ell_\infty$ &  $\ell_2$ & $\ell_\infty$ \\ \hline
\multirow{5}{*}{$(0.50,7)$} & 
  $2^{-8}$  & 1.383e-3 & 6.814e-3 & 1.351e-3 & 6.820e-3 & 1.347e-3 & 6.806e-3 & 3.113e-3 & 6.800e-3 \\
& $2^{-9}$  & 8.692e-4 & 3.503e-3 & 6.777e-4 & 3.497e-3 & 6.687e-4 & 3.497e-3 & 2.895e-3 & 4.573e-3 \\
& $2^{-10}$ & 7.657e-4 & 1.808e-3 & 7.845e-4 & 1.766e-3 & 4.660e-4 & 1.619e-3 & 2.841e-3 & 3.552e-3 \\
& $2^{-11}$ & 8.243e-4 & 1.447e-3 & 1.879e-3 & 4.204e-3 & 2.583e-4 & 6.293e-4 & 2.830e-3 & 3.078e-3 \\
& $2^{-12}$ & 5.423e-3 & 1.135e-2 & 1.976e-3 & 4.831e-3 & 1.447e-3 & 2.861e-3 & 2.828e-3 & 2.902e-3 \\\hline  
\multirow{5}{*}{$(0.75,7)$} & 
  $2^{-8}$  & 4.194e-4 & 1.277e-3 & 4.226e-4 & 1.278e-3 & 4.206e-4 & 1.272e-3 & 1.558e-3 & 2.276e-3 \\
& $2^{-9}$  & 2.509e-4 & 6.038e-4 & 2.281e-4 & 6.053e-4 & 1.967e-4 & 6.029e-4 & 1.514e-3 & 1.984e-3 \\
& $2^{-10}$ & 4.264e-4 & 9.644e-4 & 2.410e-4 & 5.451e-4 & 1.234e-4 & 2.604e-4 & 1.503e-3 & 1.887e-3 \\
& $2^{-11}$ & 5.222e-4 & 1.206e-3 & 2.128e-4 & 3.923e-4 & 5.601e-4 & 1.185e-3 & 1.500e-3 & 1.843e-3 \\
& $2^{-12}$ & 6.560e-5 & 1.420e-4 & 3.077e-3 & 6.268e-3 & 1.316e-3 & 2.819e-3 & 1.499e-3 & 1.823e-3 \\\hline 
\end{tabular}
\end{table}
}

Next set of numerical experiments is presented in Tables~\ref{tab:2D results 2} -- \ref{tab:2D results 3}. 
For $\alpha=0.5$ and $k=7$, we have $\mu_1>\xi_2$ for $h\ge2^{-12}$ and both $(8,7)$-  and $(8,8)$-R-BURA 
methods are reliable. Their $\ell_2$ 
relative errors are smaller than the corresponding errors for the $k$-Q-solver on all considered mesh sizes. 
The $(8,8)$-R-BURA solver is more accurate than the $(8,7)$-R-BURA one. The choice $k=7$ for 
the BURA solver is reliable for $h\ge2^{-11}$, but for $h=2^{-12}$ 
a larger k is needed. Like the 1-D case, BURA and R-BURA 
solvers behave similarly when $k$ is properly chosen.  
{\small
\begin{table}[t!]
\centering
\caption{Relative errors for $\bff_2$ on various discretization levels and $\alpha=\{0.5,0.75\}$.}\label{tab:2D results 3}
 \begin{tabular}{|c|c|cc|cc|cc|cc|}
\hline
\multirow{3}{*}{$(\alpha,k)$} & \multirow{3}{*}{$h$} & \multicolumn{8}{|c|}{Tensor product cosine right-hand-side}\\ \cline{3-10}
& & \multicolumn{2}{|c|}{$(k,k)$-BURA} & \multicolumn{2}{|c|}{$(k+1,k)$-R-BURA} & \multicolumn{2}{|c|}{$(k+1,k+1)$-R-BURA} & \multicolumn{2}{|c|}{$k$-Q-method} \\ 
& &  $\ell_2$ & $\ell_\infty$ &  $\ell_2$ & $\ell_\infty$ &  $\ell_2$ & $\ell_\infty$ &  $\ell_2$ & $\ell_\infty$ \\ \hline
\multirow{5}{*}{$(0.50,7)$} & 
  $2^{-8}$  & 1.509e-4 & 3.299e-4 & 5.790e-5 & 1.810e-4 & 5.031e-5 & 1.809e-4 & 1.423e-3 & 2.901e-3\\
& $2^{-9}$  & 2.717e-4 & 6.065e-4 & 1.179e-4 & 2.901e-4 & 1.075e-4 & 2.438e-4 & 1.418e-3 & 2.867e-3\\
& $2^{-10}$ & 3.432e-4 & 8.118e-4 & 3.292e-4 & 7.450e-4 & 1.801e-4 & 4.192e-4 & 1.415e-3 & 2.858e-3\\
& $2^{-11}$ & 4.545e-4 & 1.282e-3 & 8.335e-4 & 1.817e-3 & 1.648e-4 & 4.218e-4 & 1.415e-3 & 2.856e-3\\
& $2^{-12}$ & 2.420e-3 & 5.190e-3 & 9.188e-4 & 2.139e-3 & 6.791e-4 & 1.635e-3 & 1.414e-3 & 2.856e-3\\\hline
\multirow{5}{*}{$(0.75,7)$} & 
  $2^{-8}$  & 2.222e-5 & 5.484e-5 & 1.893e-5 & 3.896e-5 & 3.586e-6 & 8.906e-6 & 7.386e-4 & 1.422e-3\\
& $2^{-9}$  & 7.383e-5 & 1.836e-4 & 5.171e-5 & 1.086e-4 & 6.334e-6 & 1.750e-5 & 7.367e-4 & 1.420e-3\\
& $2^{-10}$ & 1.861e-4 & 4.059e-4 & 1.024e-4 & 2.215e-4 & 4.212e-5 & 9.762e-5 & 7.358e-4 & 1.420e-3\\
& $2^{-11}$ & 2.333e-4 & 5.147e-4 & 1.125e-4 & 2.941e-4 & 2.492e-4 & 5.215e-4 & 7.354e-4 & 1.420e-3\\
& $2^{-12}$ & 9.046e-5 & 2.771e-4 & 1.369e-3 & 2.802e-3 & 5.864e-4 & 1.238e-3 & 7.353e-4 & 1.420e-3\\\hline
\end{tabular}
\end{table}

For $\alpha=0.75$ and $h<2^{-10}$, we have $\mu_1\in[\xi_1,\xi_2]$ for both $(8,7)$- and $(8,8)$-R-BURA methods. 
As a result the $(8,8)$-R-BURA solution is less accurate than the one obtained by $(7,7)$-BURA  for 
$h=\{2^{-11},2^{-12}\}$, while the $(8,7)$-R-BURA solver is outperformed by all the other three for 
$h=2^{-12}$. Once we guarantee that $\mu_1>\xi_2$, the R-BURA approach gives 
rise to the highest accuracy. Again, the $(8,8)$-R-BURA solution is more accurate than the one
obtained by $(8,7)$-R-BURA.

Finally, we present a comparison on numerical accuracy versus computational efficiency for properly chosen 
$k$ and $h$. We fix $h=2^{-10}$ and for each BURA-related method we compute the smallest $k$, such that the 
corresponding $k$-Q-method gives smaller  relative $\ell_2$-error for $\bff_1$ and $\bff_2$. When $\alpha=0.25$ 
the $(9,9)$-BURA solver has lower accuracy  than the $37$-Q-method for $\bff_1$ and the $36$-Q-method for 
$\bff_2$, respectively. This means that, instead of the 10 linear systems incorporated in the BURA-method, we 
need to solve 39, respectively 37, linear systems for the Q-method. For $\alpha=0.5$ and both $\{\bff_1,\bff_2\}$ 
we need to use $k=16$ for the Q-method to get better accuracy than $(7,7)$-BURA, $k=16$ for the Q-method to 
get better accuracy than $(8,7)$-R-BURA, and $k=20$ for the Q-method to get better accuracy than $(8,8)$-R-BURA. 
For $\bff_1$ and $\alpha=0.75$ we need to use $k=13$ for the Q-method to get better accuracy than $(7,7)$-BURA, 
$k=17$ for the Q-method to get better accuracy than $(8,7)$-R-BURA, and $k=25$ for the Q-method to get better 
accuracy than $(8,8)$-R-BURA. Finally, for $\bff_2$ and $\alpha=0.75$ we need to use $k=13$ for the Q-method 
to get better accuracy than $(7,7)$-BURA, $k=21$ for the Q-method to get better accuracy than $(8,7)$-R-BURA, 
and $k=29$ for the Q-method to get better accuracy than $(8,8)$-R-BURA. Therefore, with respect to numerical 
accuracy versus computational efficiency the R-BURA solver for $\alpha=0.75$ behaves similarly to the BURA 
solver for $\alpha=0.25$ and can be up to four times more efficient than the corresponding Q-solver.    

\section{Concluding remarks}\label{sec:conclusion}
%%%%%%%%%%%%%%%%%%%%%%%%%%%%%%%%%%
We present a comparative analysis of three methods for solving
equations involving fractional powers of elliptic operators, namely, the method of Bonito and Pasciak, \cite{BP15},
BURA method based on the best rational approximation of $t^{1-\alpha}$, \cite{HLMMV18}, and the new
method, R-BURA, based on the best rational approximation of $t^{\alpha}$ on $[0,1]$.

The method of Bonito and Pasciak, \cite{BP15},
uses Sinc quadratures and has exponential convergence 
with respect to the number of quadrature nodes. The BURA method, \cite{HLMMV18}, 
has exponential convergence as well, is accurate for 
$\alpha$ close to $0$, and performs well for fixed step-size $h$. 
%but deteriorates when $h \to 0$. 
The new method, R-BURA, has also exponential convergence with respect to the degree of the rational approximation for fixed step-size $h$. In contrast to BURA, R-BURA method performs better for $\alpha$ close to $1$. However, the accuracy of both methods deteriorates when $h \to 0$.

Based on this study, we expect that 
one could be able to construct a method that combines the
advantages of these approaches, computational efficiency and exponential convergence rate.

\section*{Acknowledgement}
%%%%%%%%%%%%%%%%%%%%%%%%%%%%%%%%%%
This research has been partially supported by the Bulgarian National Science Fund under grant No. BNSF-DN12/1. The work of  R. Lazarov has been partially supported by the grant NSF-DMS \#1620318. The work of S. Harizanov has been partially supported by the Bulgarian National Science Fund under grant No. BNSF-DM02/2.

\bibliographystyle{abbrv}
\bibliography{Strobl_Arxiv_2018}

\end{document}